\newtheorem{theorem}{Theorem}
\numberwithin{theorem}{section}
\newtheorem{lemma}[theorem]{Lemma}
\newtheorem{corollary}[theorem]{Corollary}
\theoremstyle{definition}
\newtheorem{definition}[theorem]{Definition}
\theoremstyle{definition}
\newtheorem{remark}[theorem]{Remark}
\numberwithin{equation}{section}
\numberwithin{figure}{section}
\renewcommand{\Re}{\mathop{\rm Re}\nolimits}
\renewcommand{\Im}{\mathop{\rm Im}\nolimits}
\newcommand{\R}{\mathbb R}
\newcommand{\C}{\mathbb{C}}
\newcommand{\Z}{\mathbb{Z}}
\renewcommand{\L}{\mathcal{L}}
\newcommand{\Op}{\mathrm{Op}}
\newcommand{\w}{{\rm w}}
\newcommand{\la}{\langle}
\newcommand{\ra}{\rangle}
\newcommand{\hf}{{\frac 12}}
\newcommand{\thf}{{\tfrac 12}}
\newcommand{\bigO}{\mathcal{O}}
\renewcommand{\S}{\mathcal{S}}
\newcommand{\eps}{\epsilon}
\newcommand{\one}{{\mathds 1}}
\DeclareMathOperator{\order}{order}
\begin{document}

\title[Subelliptic Fokker--Planck]{Lindblad evolution with subelliptic diffusion}
\author[H. F. Smith]{Hart F. Smith}
\address{Department of Mathematics, University of Washington, Seattle, WA 98195-4350, USA}
\email{hfsmith@uw.edu}
\keywords{Subelliptic, Fokker-Planck, Lindbladian}
  
\subjclass[2020]{35Q84 (Primary), 35H20 (Secondary)}

\begin{abstract}
We consider classical/quantum correspondence in Lindblad evolution with jump operators for which the corresponding Fokker--Planck equation 
is subelliptic. This allows us to consider the physical model proposed by Zurek and Paz \cite{zurek}, and to extend some of the recent mathematical results of 
Hernandez, Ranard and Riedel \cite{HRR}, \cite{MR4837215}, Galkowski and Zworski \cite{gaz5}, and Li \cite{Li}, where the diffusion term in the Fokker-Planck equation was assumed elliptic.
We consider the case where the jump operators $\ell_j$ in the Lindbladian are linear functions of $x$, and place an assumption which implies that the H\"ormander condition holds for the resulting Fokker-Planck equation. By constructing a suitable parametrix for this equation we show that the semiclassical derivative estimates established in \cite{gaz5} and \cite{Li} for elliptic diffusion also hold in the subelliptic case, with global bounds in $L^p$ for all $1\le p\le \infty$.
\end{abstract}

\maketitle

\section{Introduction}

In this paper we consider the question of the length of time for which the classical/quantum correspondence is valid for quantum systems coupled to larger environments. The basic model comes from the work of Zurek and Paz \cite{zurek} and we are motivated by recent mathematical work of 
Hernandez, Ranard and Riedel \cite{HRR}, \cite{MR4837215}, Galkowski and Zworski \cite{gaz5}, and Li \cite{Li}.
The paper \cite{HRR} in particular can be consulted for physical intuitions and additional indications to the literature. 

To explain the setting we first consider classical evolution governed by a Hamiltonian $ p = p ( x, \xi ) $. Here we use the mathematical notation in which $ x$ and $ \xi $ are respectively the position and momentum variables; a typical $ p $ is given by $ p ( x , \xi ) = \frac 12|\xi|^2 + V ( x ) $, where $\frac 12 |\xi|^2 $ is the kinetic energy and $ V ( x ) $ the potential. A classical observable (a function of position and momentum) $ a = a ( x, \xi ) $ evolves in time according to the Hamiltonian flow
$ \varphi_t = \exp t H_p $, $ H_p := \partial_\xi p \cdot \partial x - \partial_x p \cdot \partial_\xi$. This means that
\begin{equation}
\label{eq:ate}   
\partial_t a_t ( x, \xi ) = H_p a_t ( x, \xi ) \ \Longleftrightarrow \ 
a_t ( x, \xi ) = \varphi_t^* a ( x, \xi ) := a ( \varphi_t ( x, \xi ) ) . 
\end{equation}
On the classical level interaction with the environment can be modeled by adding Brownian motion in the momentum variables. This corresponds to changing
the evolution equation in \eqref{eq:ate} to a Fokker--Planck equation:
\begin{equation}
\label{eq:ateFP}
\partial_t a_t ( x, \xi ) = H_p a_t ( x, \xi ) +  \epsilon^2 \Delta_\xi a_t ( x, \xi ) .
\end{equation}
The connection to Brownian motion $ W $ in $ \xi $ comes from considering the Langevin equation 
$$ 
( x'(t), \xi'( t ))  = H_p ( x (t), \xi(t) ) + \epsilon \sqrt{2} \dot W ,
$$
so that $ a_t ( x( 0 ) , \xi ( 0 ) ) = \mathbb E ( a ( x ( t ) , \xi ( t ) ) )$. See Drouot \cite{dr} and Ren--Tao \cite{rent} for recent progress on the closely related kinetic Brownian motion. 

The papers \cite{HRR, gaz5,Li}  considered equations like \eqref{eq:ateFP} with $\Delta_{\xi}$ replaced by $\Delta_{x,\xi}$, that is, with comparable rates of diffusion in both $x$ and $\xi$. With this change the equation becomes a heat equation with drift, and more general cases of $p(x,\xi)$ can be handled.
On the other hand, for $ p = \frac 12|\xi|^2+ V ( x ) $ the equation \eqref{eq:ateFP} as written leads to diffusion in $x$ through the term $\xi\cdot\partial_x$ in $H_p$, but at a different time scale than in $\xi$. 
Intuitively, the relation $\dot x=\xi$ implies that diffusion in $\xi$ at scale $\eps t^{1/2}$  leads to diffusion in $x$ at scale $\eps t^{3/2}$. This nonisotropic diffusion is captured in the propagator for \eqref{eq:ateFP} constructed in Theorem \ref{thm:epsleftinverse} of this paper; see \eqref{eqn:K0} and Definition \ref{def:Kgamma} for the definitions of $K_0$ and $K_\gamma$. It is the $\epsilon$ scaling that is key to the main results of this paper, rather than the $t$ scaling.

The condition needed on $p$ to ensure this rate of diffusion in $x$ is the {\em step-two H\"ormander condition}, which is that the collection of Lie brackets $[H_p,\partial_{\xi_j}]$ for $1\le j\le n$ span the tangent space in $x$ at every point. This implies that equation \eqref{eq:ateFP} is {\em subelliptic} in the variables $(t,x,\xi)$; see H\"ormander \cite{Ho} and Rothschild--Stein \cite{RS}.
In this paper we restrict attention to $p=\frac 12|\xi|^2+V(x)$ for the sake of conciseness. The spanning condition is also key to the {\em dispersive property} of the Schr\"odinger equation.

In the quantum framework, the Hamiltonian and the observables are Weyl quantizations (see \cite[Chapter 4]{e-z}) of the classical Hamiltonian and classical observables:
$ P =  p^{\rm{w}}  ( x, h D ) $ and $ A = a^\w (x, h D ) $,  where $ h $ is the effective Planck constant/wave length of the problem. 
The evolution is now governed by the Schr\"odinger equation:
\begin{equation}
\label{eq:ateq}
\partial_t A_t = \tfrac i h [ P, A_t ] \ \Longleftrightarrow \ A_t = e^{ i t P/h } A e^{ - i t P/h } . 
\end{equation}
Classical/quantum correspondence is valid when $ A_t $ is close to $ a_t^{\rm{w}}( x, h D ) $ from \eqref{eq:ate}. That typically holds only for 
$ t \leq C \log ( 1/h )$, the Ehrenfest time -- see  \cite[Sect.11.4]{e-z} and references given therein. 

The evolved operators $ A_t $ can be written as quantizations of classical observables,  $ A_t = a^\w ( t, x, \xi , h ) $. Using the calculus of Weyl operators, see \cite[Theorems 4.12 and 4.18]{e-z}, the differential equation \eqref{eq:ateq} can be formally written, as in \cite{zurek}, as 
\begin{equation}
\label{eq:ateqf'} 
\begin{split}  & \partial_t a ( t, x, \xi , h ) = H_p a ( t, x, \xi , h ) \\
& \ \ \   -
 2 \sum_{ k = 1}^\infty \frac{ (-1)^k h^{ 2k } }{ ( 2 k + 1)! } B( D )^{2k+1} ( p ( x, \xi ) a ( t, y , \eta , h) )|_{x=y, \xi = \eta } ,
 \end{split} 
 \end{equation}
where $ B ( D ) =  \tfrac12 \sigma ( D_x, D_\xi, D_y, D_\eta )$ with $ \sigma $ the symplectic form and $ D = -i \partial $. We should stress that, mathematically, this is typically a divergent asymptotic expansion. Nevertheless, following \cite{zurek} (see also \cite{gob} for an interesting numerical example in this setting), an interaction with
the environment can be introduced as in \eqref{eq:ateFP} by modifying \eqref{eq:ateqf'} to 
\begin{equation}
\label{eq:ateqf} 
\begin{split}  & \partial_t a ( t , x, \xi , h ) = H_p a ( t,  x, \xi , h) + \varepsilon^2  \Delta_\xi a ( t, x, \xi , h ) \\
& \ \ \   -
 2 \sum_{ k = 1}^\infty \frac{ (-1)^k h^{ 2k } }{ ( 2 k + 1)! } B( D )^{2k+1} ( p ( x, \xi ) a ( t, y , \eta , h) )|_{x=y, \xi = \eta } .
 \end{split} 
 \end{equation}
This formal equation can be presented as an evolution equation for operators $ A_t = a^\w ( t , x, h D , h ) $ by noting that (see 
 \cite[(8.15)]{e-z}) 
 \[  ( \partial_{\xi_j} a )^\w  = - \tfrac i h [ x_j , a ] , \]
 where $ x_j $ denotes the multiplication operator by $ x_j $. Hence, a rigorous formulation of \eqref{eq:ateqf} reads as
\begin{align}
\partial_t A_t & = \frac i h [ P , A_t ] - \frac{\gamma}{ 2h } \sum_{j=1}^n [ x_j, [ x_j , A_t ] ]  
\nonumber
\\
 & = \frac i h [ P , A_t ] + \frac{\gamma}{ 2h } \sum_{ j=1}^J L_j [ A, L_j^* ] + [ L_j , A ] L_j^* := \mathcal L A ,
 \label{eq:Lind1}
 \end{align}
 where in this case $J=n$, and
 \[ \gamma := 2 \epsilon^2 /h  , \ \ \  L_j : = x_j . \]
The equation \eqref{eq:Lind1} is the general form of Lindblad evolution with a finite number of jump operators $ L_j $. The {\em superoperator} $ \mathcal L $ 
generates a semigroup which, at least formally, preserves the trace and total positivity. In fact any 
superoperator with these properties has to be of this form (with possibly an infinite set of jump operators) -- see \cite{HRR} and \cite{gaz5} for more on that and for references. The representation in terms of the Lindbladian is close to the point of view of Caldeira and Leggett \cite{cal}, see also \cite{gao}. (We note here that 
as in \cite{gaz5} we look at the observables rather than densities.)

\subsection{Assumptions on $ P $ and $ L_j$}
Similar to \cite{HRR,gaz5,Li}, in this paper we consider a Schr\"odinger type Hamiltonian
$p(x,\xi)=\frac 12 |\xi|^2+V(x)$. We will assume
$V(x)$ is smooth and real-valued function on $\R^n$, and also assume that
\begin{equation}\label{eqn:Vbounds}
|\partial_x^\alpha V(x)|\le 
\begin{cases}
C\,\la x\ra^{2-|\alpha|},& |\alpha|\le 2,\\
C_\alpha,& |\alpha|\ge 2.
\end{cases}
\end{equation}
We use $H_p$ to denote the corresponding Hamiltonian vector field
$$
H_p=\xi\cdot\partial_x-\nabla V(x)\cdot\partial_\xi.
$$
As in \cite{HRR} we consider a family of jump operators $\{L_j\}_{j=1}^J$ corresponding to multiplication by a linear function $\ell_j$, but we assume that each $\ell_j$ depends only on the space variable $x$, 
$$
\ell_j(x)=\sum_{i=1}^n c_{ji}\,x_i,\quad 1\le j\le J, \quad c_{ji}\in\C.
$$

The Lindbladian $\L$ is defined as in \eqref{eq:Lind1}.
As in the preceding discussion, this models the evolution of an observable $A$ developing under the Schr\"odinger system and interacting stochastically with a larger environment through the collection of jump operators $L_j$.
We note that $\{\ell_j,\bar\ell_j\}=0$ so the Lindblad equation has no friction; that is, the term $\mu$ in \cite[(1.6)]{gaz5} vanishes.

Since $\{\ell_j,\bar\ell_j\}=0$ the corresponding Fokker--Planck operator, which gives the leading term in $\L A$ when $P=p^\w(x,hD)$ and $A=a^\w(x,hD)$, becomes
\begin{equation}\label{eqn:Qdef}
Q=H_p
+\gamma\sum_{j=1}^J \Im(\bar\ell_j\nabla_x\ell_j)\cdot\partial_\xi
+\frac{\gamma h}2\sum_{j=1}^J (\nabla_x\ell_j\cdot\partial_\xi)(\nabla_x\bar\ell_j\cdot\partial_\xi).
\end{equation}
We assume that the second order terms are non-degenerate in $\partial_\xi$, which implies that there is a real, non-singular $n\times n$ matrix $B$,
$$
\sum_{j=1}^J \Re\Bigl((\nabla_x \ell_j)\otimes(\nabla_x\bar\ell_j)\Bigr)= BB^T.
$$
We may then write
\begin{equation}\label{eqn:cholesky}
\sum_{j=1}^J(\nabla_x \ell_j\cdot\partial_\xi)(\nabla_x\bar\ell_j\cdot\partial_\xi)=
\sum_{j=1}^n X_j^2,\quad X_j=\sum_{i=1}^n B_{ij}\partial_{\xi_i}.
\end{equation}
While \eqref{eqn:Qdef} is a more general form than \eqref{eq:ateFP}, we can treat the first sum as part of $H_p$ and its analysis adds no complications over that of \eqref{eq:ateFP}.

\subsection{Classical quantum correspondence}
In Section \ref{sec:classquant} we show that certain results from \cite{gaz5} and \cite{Li} concerning long time classical/quantum correspondence hold for the kind of jump operators considered herein. For simplicity we focus on two particularly important cases of their results. The class of observables considered are given by semi-classical symbols based on $L^q$ spaces. As in those papers the point is that the agreement of Lindblad evolution and of Fokker--Planck evolutions holds for significantly longer times (e.g.\ $ h^{-\frac12} $ vs.\ $ \log (1/h) $) than the agreement of the Schr\"odinger and Hamiltonian evolutions. 

\begin{definition}
The symbol class $S^{L^q}_\rho$ denotes symbols $a(x,\xi,h)$ such that, for all multi-indices $\alpha$, there is $C_\alpha$ so that
$$
\|\partial_{x,\xi}^\alpha a\|_{L^q}\le C_\alpha h^{\frac nq-\rho |\alpha|}.
$$
The topology on $S^{L^q}_\rho$ is that induced by the family of seminorms
$$
\|a\|_{N,S^{L^q}_\rho}=\sum_{|\alpha|\le N} h^{\rho |\alpha|-\frac nq}\|\partial_{x,\xi}^\alpha a\|_{L^q}.
$$
\end{definition}

The following theorem extends Theorem 2 of \cite{Li} for $\rho=\frac 12$ and $\gamma>0$ held constant, and shows that the classical and quantum evolution remain close in the trace norm for times of order $t\lesssim h^{-\frac 12}$. Here $\L^1$ denotes the trace norm for operators acting on $L^2(\R^n)$.

\begin{theorem}\label{thm:Li}
Assume that $a\in S^{L^1}_{1/2}$, and let $a(t)=e^{tQ}a$. If $A(t)$ satisfies
$\partial_tA(t)=\L A(t)$ with $A(0)=\Op_h^\w(a)$, there is $C$ depending on $\|a_0\|_{N,S^{L^1}_{1/2}}$ for some $N$, such that
$$
\|A(t)-\Op_h^\w(a(t))\|_{\L^1}\le Cth^{\frac 12},\quad 0\le t<\infty.
$$
\end{theorem}

The paper \cite{gaz5} established long-time correspondence in the Hilbert-Schmidt norm
$\L^2$, and yields analogous estimates when $h^{\frac 13}\lesssim\gamma\lesssim 1$. 
We prove the following extension of the second estimate in (6.2) of Theorem 4 in \cite{gaz5} to the case of the jump operators considered herein. 

\begin{theorem}\label{thm:gaz5}
Assume that $\frac 12\le \rho\le\frac 23$, $a\in S^{L^2}_{\rho}$, and let $a(t)=e^{tQ}a$. Assume also that $\gamma=\gamma(h)$ satisfies $h^{2\rho-1}\le\gamma\le 1$.
Then if $A(t)$ satisfies
$\partial_tA(t)=\L A(t)$ with $A(0)=\Op_h^\w(a)$, there is $C$ depending on $\|a_0\|_{N,S^{L^2}_\rho}$ for some $N$, such that
$$
\|A(t)-\Op_h^\w(a(t))\|_{\L^2}\le Cth^{2-3\rho},\quad 0\le t<\infty.
$$
\end{theorem}

\subsection{The subelliptic estimate}

Theorem \ref{thm:Li} and \ref{thm:gaz5} are proved using the strategies of papers \cite{Li} and \cite{gaz5} together with global-in-time estimates for the Fokker--Planck evolution $\partial_t a(t)=Q a(t)$, with $Q$ given by \eqref{eqn:Qdef}. 

We assume that both $\gamma, h\in (0,1]$, and let $\eps=\sqrt{\gamma h/2}$. If we treat $\eps$ and 
$\gamma$ as the independent variables then the estimates involve $\eps$, but the constants in the estimates are uniform over $\gamma,\eps\in (0,1]$. 
Li \cite{Li} considered the case that $1\le\gamma\le h^{-1}$, but the presence of the linear term in 
$\partial_\xi$ in \eqref{eqn:Qdef}, i.e.~ the first sum in \eqref{eqn:Qdef}, leads us to require bounds on $\gamma$; for simplicity we assume $\gamma\le 1$. If $\ell_j$ is real valued, however, the estimates of Theorem \ref{thm:main} hold for $\gamma\le h^{-1}$, since in that case $\eps\le 1$ and the first sum in \eqref{eqn:Qdef} goes away.

Our main result, analogous to ones in \cite{gaz5} and \cite{Li}, is the following global-in-time derivative estimate, valid for all $N$ and all $1\le p\le\infty$,
\begin{multline}\label{eqn:Gronwall}
\sup_{0\le t<\infty}\sum_{|\alpha|+|\beta|\le N}
\|(\eps\partial_x)^\alpha(\eps\partial_\xi)^\beta a(t)\|_{L^p(\R^{2n})}\\
\le 
C_N\sum_{|\alpha|+|\beta|\le N}\|(\eps\partial_x)^\alpha(\eps\partial_\xi)^\beta a(0)\|_{L^p(\R^{2n})}.
\end{multline}

In Section \ref{sec:proof} we prove a more general result, Theorem \ref{thm:main}, which also captures the smoothing effect of $e^{tQ}$ for $t>0$.

The proof of \eqref{eqn:Gronwall} is a modification of our earlier work \cite{MR4201982} which concerned the kinetic Brownian equation as in \cite{dr}. Unlike the equations considered in that work the Fokker-Planck equation has a distinguished time variable, which we exploit to handle the non-compact setting. Additionally, rather than use a pseudodifferential construction as in \cite{MR4201982}, in this paper
we construct a parametrix  (to any prescribed order) via a non-isotropic heat kernel iteration, analogous to the isotropic iteration in \cite{Li}. This allows us to establish bounds in $L^p$ for all $1\le p\le\infty$.

We briefly outline the parametrix construction here in the case that $B$ is the identity matrix as in \eqref{eq:ateFP}, and first consider $\eps=1$.

We begin by pulling the Fokker-Planck equation back by the flow along the first-order terms in \eqref{eqn:Qdef}. The leading order terms in the pullback of $\partial_t-Q$ are then $\partial_t-(\partial_\xi+t\partial_x)^2$, using the subelliptic notion of order. It is convenient to make a linear change of variables to put this in the form
$$
\partial_t-\thf\xi\cdot\partial_x-(\partial_\xi+\thf t\partial_x)^2,
$$
which is a H\"ormander type operator involving left-invariant vector fields on a step 2 nilpotent Lie group. We will see that the fundamental solution on this Lie group can be written explicitly in terms of the Gaussian heat kernel.

This produces the leading term in a left/right inverse for $\partial_t-Q$, and we then use heat kernel iteration steps to produce a left or right inverse to any desired order. In the iteration step composition of kernels is expressed as convolution in the group product.

Care needs to be taken in that the pullback of $\partial_\xi$ and $\partial_x$ by the flow gives a frame whose second derivatives in $t$ increase unboundedly with $\xi$ due to the $\xi\cdot\partial_x$ term in the flow. To obtain global estimates we therefore can use only the first couple derivatives in $t$, which is possible since we use the heat kernel form of the fundamental solution rather than the pseudodifferential representation used in \cite{MR4201982}.

The case of $0<\eps\le 1$ is handled by appropriately scaling the above parametrix construction, and taking care to note that only positive powers of $\eps$ arise in the various expansions.


\section{The model operator and the case $\eps=1$}\label{sec:modeldomain}

We work with the following frame of vector fields on $\R^{2n+1}$, in the variable $y=(y_0,y',y'')\in\R\times\R^n\times\R^n$,
\begin{itemize}
\item[{$\bullet$}] 
$Y_0=\partial_{0}-\hf \sum_{j=1}^n y_j\partial_{j+n}$

\item[{$\bullet$}]
$Y_j=\partial_{j}+\hf y_0\partial_{j+n}\;$ for $\;1\le j\le n\rule{0pt}{17pt}$,

\item[{$\bullet$}] $Y_j=\partial_{j}\;$ for $\;j\ge n+1\rule{0pt}{17pt}$.
\end{itemize}
These satisfy the commutation relations
$$
[Y_0,Y_j]=Y_{j+n}\quad\text{if}\quad 1\le j\le n,
$$
with all other commutators equal to $0$. The $Y_j$ form a step-2 nilpotent Lie algebra, and
are the left-invariant frame of vector fields associated to the Lie group structure on $\R^{2n+1}$ with product
$$
z\cdot y=(y_0+z_0,y'+z'\!,y''+z''+\thf z_0y'-\thf y_0z').
$$
Observe that $z^{-1}=-z$.
We will use the shorthand notation 
$$
Y'=(Y_1,\ldots, Y_n),\qquad Y''=(Y_{n+1},\ldots, Y_{2n}).
$$
There is an associated dilation structure, given by
$$
\delta_r(y)=\bigl(r^2 y_0,r y',r^3 y''\bigr),
$$
under which $Y_0$, $Y'$, and $Y''$ are respectively of order $2$, $1$, and $3$, where we say an operator $L$ is of order $j$ if
$$
L(f\circ\delta_r)=r^j(Lf)\circ\delta_r\quad\text{for}\;\,r>0.
$$
For a multi-index $\alpha$, we define its order to be
$$
\order(\alpha)=2\alpha_0+\alpha_1+\cdots+\alpha_d+3\alpha_{d+1}+\cdots+3\alpha_{2d}=2\alpha_0+|\alpha'|+3|\alpha''|.
$$
Consider $\alpha\in \Z\times \Z_+^{2n}$, $\beta\in \Z_+^{2n+1}$. That is, we will allow $\alpha_0<0$, but require $\alpha',\alpha''\ge 0$ and $\beta\ge 0$. Then we have
$$
\order(y^\alpha Y^\beta)=\order(y^\alpha\partial_y^\beta)=\order(\beta)-\order(\alpha).
$$
Observe that $y^\alpha Y^\beta$ can be written as a linear combination of terms of the form $y^\theta\partial_y^\gamma$ of the same order, and vice-versa.

The operator 
$$
P_0(y,\partial_y)=\Bigl(Y_0-\sum_{j=1}^nY_j^2\Bigr)
$$
is subelliptic since it satisfies the H\"ormander condition, see \cite{Ho} and \cite{RS}.
By a result of Folland \cite{F}, $P_0(y,\partial_y)$ has a unique fundamental solution that is homogeneous of degree $-2$, which in this case takes a simple form
\begin{equation}\label{eqn:K0}
K_0(y)=G_{y_0}(y')G_{\frac 1{12} y_0^3}(y''),
\end{equation}
where $G=e^{t\Delta}$ is the standard heat kernel on $\R^n$:
$$
G_t(w)=\one_{t>0}\,(4\pi t)^{-\frac n2}\;e^{-\frac{|w|^2}{4t}}.
$$

The solution to the initial value problem $P_0(y,\partial_y)u=0$  for $y_0>0$ with
$u|_{y_0=0}=f$ is given by
$$
u(y)=\int K_0(y_0,y'-z',y''-z''+\thf y_0z')f(z',z'')\,dz'\,dz''.
$$

\begin{definition}\label{def:Kgamma}
For $\gamma\in \Z\times\Z_+^{2n}$, we define $K_\gamma(y)=y^\gamma K_0(y)$. We say that $K(y)$ is a kernel of order $m$, respectively of order $\le m$, if $K$ is a finite linear combination $\sum c_\gamma K_\gamma$ with $\order(\gamma)=-m$, respectively $\order(\gamma)\ge -m$.
\end{definition}

Observe that for each $\alpha\in\Z_+^{2n+1}$ one can write
\begin{equation}\label{eqn:YKdecomp}
\partial_y^\beta K_0(y)=\sum c_{\beta,\gamma}\,K_\gamma(y),
\end{equation}
where $\gamma',\gamma''\ge 0$, $\order(\gamma)=-\order(\beta)$, and $0\ge\gamma_0\ge -3|\beta|$. That is, $\partial_y^\beta K_0$ is a kernel of order equal to $\order(\beta)$. The same holds for $Y^\beta K_0$.

\begin{lemma}\label{lem1}
If $\gamma\in \Z\times\Z_+^{2n}$ then one can write
$$
K_\gamma(y)=\sum_{j,\theta} b_{j,\theta}\,y_0^j\,\partial_y^\theta K_0(y)=\sum_{j,\theta} c_{j,\theta} \,y_0^j\,Y^\theta K_0(y),
$$
where $\theta_0=0$, $|\theta|\le |\gamma|$ and $\order(y_0^j\partial_y^\theta)=\order(y_0^jY^\theta)=\order(y^\gamma)$.
\end{lemma}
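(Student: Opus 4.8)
The plan is to express the monomial factor $y^\gamma$, which may involve a negative power $y_0^{\gamma_0}$, in terms of $y_0$-powers together with the vector fields $Y^\theta$ or derivatives $\partial_y^\theta$ acting on $K_0$. The key observation is that the heat kernel $K_0(y)=G_{y_0}(y')G_{\frac{1}{12}y_0^3}(y'')$ has Gaussian factors in $y'$ and $y''$, so that differentiating in $y_j$ for $1\le j\le 2n$ brings down a factor that is (up to a power of $y_0$) a component of $y'$ or $y''$. Concretely, for $1\le j\le n$ one has $\partial_{y_j}G_{y_0}(y')=-\frac{y_j}{2y_0}\,G_{y_0}(y')$, hence
$$
y_j\,K_0(y)=-2y_0\,\partial_{y_j}K_0(y),
$$
and for $n+1\le j\le 2n$,
$$
y_j\,K_0(y)=-\tfrac16\,y_0^3\,\partial_{y_j}K_0(y).
$$
Each such step replaces one factor $y_j$ (with $j\ge 1$) by a power of $y_0$ times a first derivative, lowering $|\gamma'|+|\gamma''|$ by one while keeping the total order unchanged, since $\order(y_0)=2$ and $\order(\partial_{y_j})=1$ for $1\le j\le n$ matches $\order(y_j)=-1$, while $\order(y_0^3)=6$ and $\order(\partial_{y_j})=3$ for $j\ge n+1$ matches $\order(y_j)=-3$.

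The key steps, in order, are: first, iterate the two identities above to eliminate all factors $y_j$ with $1\le j\le 2n$, at the cost of producing further $\partial_{y_j}$ derivatives and additional powers of $y_0$; after $|\gamma'|+|\gamma''|$ steps one arrives at an expression $\sum b_{j,\theta}\,y_0^j\,\partial_y^\theta K_0(y)$ in which $\theta$ involves only the indices $1,\dots,2n$, so $\theta_0=0$, and $|\theta|\le|\gamma'|+|\gamma''|\le|\gamma|$. Note that the integer $j$ here may be negative (from the original $y_0^{\gamma_0}$ with $\gamma_0<0$), which is permitted. Second, check the order bookkeeping: each substitution preserves total order and $\order(y^\gamma)=\order(\beta)$-type identities from the discussion preceding the lemma give $\order(y_0^j\partial_y^\theta)=\order(y^\gamma)$. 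Third, convert $\partial_y^\theta K_0$ into $Y^\theta K_0$: since $Y_j=\partial_{y_j}$ for $j\ge n+1$ and $Y_j=\partial_{y_j}+\frac12 y_0\partial_{y_{j+n}}$ for $1\le j\le n$, one can invert this linear relation to write each $\partial_{y_j}$ as a combination of $Y_j$ and $y_0 Y_{j+n}$; substituting and commuting the $y_0$ factors past the (lower-order) vector fields — using that $Y_j$ for $j\ge1$ annihilate $y_0$ — yields the second form $\sum c_{j,\theta}\,y_0^j\,Y^\theta K_0(y)$ with the same constraints, the $y_0$-powers only shifting by bounded amounts.

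The main obstacle is purely organizational: tracking the $y_0$-powers through the iteration and the $\partial\leftrightarrow Y$ conversion so as to confirm the stated bounds $\theta_0=0$, $|\theta|\le|\gamma|$, and the exact order identity, rather than any analytic difficulty. One should be slightly careful that applying $\partial_{y_j}$ for $1\le j\le n$ to an expression already containing $y_0$-powers and other derivatives generates, via the product rule, only terms of the claimed form, and that the relation $y_j K_0 = -2y_0\partial_{y_j}K_0$ is applied to the outermost monomial factor before differentiation so no spurious terms arise; this is the bookkeeping that makes the induction on $|\gamma'|+|\gamma''|$ go through cleanly.
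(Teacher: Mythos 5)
Your proof is correct and follows the same approach as the paper: the two key identities $y'K_0 = -2y_0\,\partial_{y'}K_0$ and $y''K_0 = -\tfrac16 y_0^3\,\partial_{y''}K_0$ (equivalently $-\tfrac16 y_0^3\,Y''K_0$), iterated and combined with the fact that commuting $y_0$-powers past the $Y_j$ (which annihilate $y_0$) preserves order. The paper's proof is just a terser statement of the same argument; the only quibble is a sign-convention slip where you write $\order(y_0)=2$ rather than $-2$ as an operator, but your accompanying arithmetic is consistent with the correct bookkeeping.
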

\begin{proof}
The result follows by observing that
$$
y''K_0=-\frac 16\, y_0^3\,Y''K_0,\qquad y'K_0=-2y_0\partial_{y'}K_0=(y_0^2Y''-2 y_0 Y')K_0,
$$
and using the fact that commutation between monomials in $y$ and monomials in $Y$ preserves the order.
\end{proof}

\begin{lemma}\label{lem2}
Assume $f(s)\in C([0,\infty))$, and let $F(z_0)=\displaystyle\int_0^{z_0} f(s)\,ds$. Then
$$
\int K_0(z^{-1}y)f(z_0)K_0(z)\,dz=F(y_0)K_0(y).
$$
\end{lemma}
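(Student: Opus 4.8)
The plan is to integrate out the $z_0$ variable first and reduce the claim to a $z_0$‑independent ``Chapman--Kolmogorov'' identity for the transition kernels. Since $K_0(z)$ is supported in $z_0>0$ and $K_0(z^{-1}y)$ in $y_0-z_0>0$, for $y_0>0$ one has, writing $z=(z_0,z',z'')$,
\[
\int K_0(z^{-1}y)\,f(z_0)\,K_0(z)\,dz=\int_0^{y_0}f(z_0)\,I(z_0)\,dz_0,\qquad I(z_0)=\int_{\R^{2n}} K_0(z^{-1}y)\,K_0(z)\,dz'\,dz''.
\]
For each fixed $z_0\in(0,y_0)$ the inner double integral is a convergent convolution of Gaussians, so Tonelli applies there; once one shows $I(z_0)=K_0(y)$, the bound $\int_0^{y_0}|f(z_0)|\,K_0(y)\,dz_0<\infty$ justifies Fubini for the full integral, and the lemma follows since $F(y_0)=\int_0^{y_0}f$. (Both sides vanish when $y_0\le 0$.) Thus the entire content is the assertion that $I(z_0)$ is independent of $z_0\in(0,y_0)$ and equals $K_0(y)$ — i.e.\ that the solution operator of $P_0$ composes correctly across an intermediate time slice.

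To prove $I(z_0)=K_0(y)$, set $a=y_0-z_0$, $b=z_0$, so $a,b>0$ and $a+b=y_0$, and insert $K_0(y)=G_{y_0}(y')G_{y_0^3/12}(y'')$ together with $K_0(z^{-1}y)=G_a(y'-z')\,G_{a^3/12}\bigl(y''-z''-\tfrac12 b\,y'+\tfrac12(a+b)z'\bigr)$. The $z''$‑integration is the convolution $G_{a^3/12}*G_{b^3/12}=G_{(a^3+b^3)/12}$. In the remaining $z'$‑integral I would use the ``Brownian bridge'' identity
\[
G_a(y'-z')\,G_b(z')=G_{a+b}(y')\,G_{\frac{ab}{a+b}}\!\Bigl(z'-\tfrac{b}{a+b}y'\Bigr),
\]
which is just completion of the square, and substitute $z'=\tfrac{b}{a+b}y'+\zeta$; the $y''$‑slot argument then collapses to $y''+\tfrac12(a+b)\zeta$, leaving the elementary Gaussian integral $\int G_\sigma(\zeta)\,G_c\bigl(y''+\lambda\zeta\bigr)\,d\zeta=G_{c+\sigma\lambda^2}(y'')$ with $\sigma=\tfrac{ab}{a+b}$, $c=\tfrac{a^3+b^3}{12}$, $\lambda=\tfrac{a+b}{2}$.

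The computation then closes on the cubic identity
\[
c+\sigma\lambda^2=\frac{a^3+b^3}{12}+\frac{ab(a+b)}{4}=\frac{(a+b)^3}{12},
\]
which gives $I(z_0)=G_{a+b}(y')\,G_{(a+b)^3/12}(y'')=K_0(y)$. I expect this identity — rather than the Gaussian bookkeeping, which is routine but must be tracked carefully — to be where the real content sits: it is exactly the compatibility between the $\tfrac1{12}y_0^3$ variance of the second‑layer kernel and the step‑$2$ group law, and it is the point at which the precise constants in $K_0$ are used. As a structural cross‑check one may instead observe that $v(y):=\int K_0(z^{-1}y)f(z_0)K_0(z)\,dz$ and $F(y_0)K_0(y)$ both solve $P_0 w=f(y_0)K_0(y)$ — for the right‑hand side one uses that $Y_1,\dots,Y_n$ annihilate functions of $y_0$, that $Y_0F=F'=f$, that $P_0K_0=\delta_0$, and that $F(0)=0$ — and both are supported in $y_0\ge0$; granting a mild growth bound on $v$, uniqueness for this subelliptic heat‑type equation forces equality. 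I would nonetheless keep the direct Gaussian computation in the paper, as it is completely self‑contained.
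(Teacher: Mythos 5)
Your proof is correct and follows the same two routes the paper indicates: the reduction to the $z_0$-independent Chapman--Kolmogorov identity $\int K_0(z^{-1}y)K_0(z)\,dz'\,dz''=K_0(y)$ (which the paper states ``can be verified by calculation''), together with the PDE-uniqueness argument offered as an alternative. Your explicit Gaussian bookkeeping --- in particular the cubic identity $\tfrac{a^3+b^3}{12}+\tfrac{ab(a+b)}{4}=\tfrac{(a+b)^3}{12}$ --- correctly supplies the computation the paper leaves implicit.
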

\begin{proof}
One can be verify by calculation that, for each $y_0,z_0$,
$$
\int K_0(z^{-1}y)K_0(z)\,dz'\,dz''=K_0(y).
$$
The statement also follows from the fact that applying $Y_0-\sum_{j=1}^n Y_j^2$ yields $f(y_0)K_0(y)$ on both sides, and both sides vanish for $y_0<0$.
\end{proof}

\begin{corollary}\label{cor:Kcomp}
Suppose that $\order(\gamma)\ge -1$, and $f_\gamma\in C([0,\infty))$. Then one can write
$$
\int K_0(z^{-1}y)f_\gamma(z_0)K_\gamma(z)\,dz=\sum_\theta f_\theta(y_0)K_\theta(y),
$$
where the sum is finite, $\order(\theta)=\order(\gamma)+2$, and $f_\theta\in C([0,\infty))$.
\end{corollary}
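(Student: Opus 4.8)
The plan is to expand $K_\gamma$ by Lemma~\ref{lem1}, move the resulting vector fields off of the factor $K_0(z)$ by integration by parts, and then reduce to iterated use of Lemma~\ref{lem2}. By Lemma~\ref{lem1}, write $K_\gamma(z)=\sum_{m,\theta}c_{m,\theta}\,z_0^{m}\,Y^\theta K_0(z)$ with $\theta_0=0$ and $\order(z_0^{m}Y^\theta)=\order(z^\gamma)=-\order(\gamma)$. Since $\order(z_0^{m}Y^\theta)=\order(Y^\theta)-2m$ and $\order(Y^\theta)\ge 0$, the hypothesis $\order(\gamma)\ge-1$ forces $m=\tfrac12\bigl(\order(Y^\theta)+\order(\gamma)\bigr)\ge 0$: this is the only place the hypothesis enters, and it guarantees that only non-negative powers of $z_0$ occur, which in turn makes each integral below absolutely convergent (using $K_0(z^{-1}y)$ to confine $z_0$ to the bounded interval $(0,y_0)$ and the bounds $\|\partial^\beta G_t\|_{L^1}\lesssim t^{-|\beta|/2}$ at its endpoints). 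It thus suffices to analyze one term $\int K_0(z^{-1}y)\,z_0^{m}f_\gamma(z_0)\,(Y^\theta K_0)(z)\,dz$ with $m\ge 0$ and $\theta_0=0$.

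The vector fields $Y_1,\dots,Y_{2n}$ involve no $\partial_{z_0}$, commute pairwise, and are divergence free, so $Y_j^*=-Y_j$; integrating by parts in $z$ moves $Y^\theta$ onto the factor $K_0(z^{-1}y)$, past the untouched factor $z_0^{m}f_\gamma(z_0)$. A direct chain-rule computation from the explicit formula $z^{-1}\!\cdot y=(y_0-z_0,\;y'-z',\;y''-z''-\thf z_0 y'+\thf y_0 z')$ gives $Y_j^{z}[K_0(z^{-1}y)]=-(\hat Y_j K_0)(z^{-1}y)$ for $1\le j\le 2n$, where in the variable $w=z^{-1}y$ one sets $\hat Y_j=Y_j-w_0Y_{j+n}$ for $1\le j\le n$ and $\hat Y_{j+n}=Y_{j+n}$; the $\hat Y_j$ and $w_0$ all commute, so iterating gives $Y^\theta_{z}[K_0(z^{-1}y)]=(-1)^{|\theta|}(\hat Y^\theta K_0)(z^{-1}y)$. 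Expanding $\hat Y^\theta$ by the binomial theorem yields $\hat Y^\theta K_0=\sum_{\mu,k}a_{\mu,k}\,w_0^{k}\,Y^\mu K_0$, a finite sum with $\mu_0=0$, $\order(w_0^{k}Y^\mu)=\order(Y^\theta)$, and $\order(Y^\mu)=\order(Y^\theta)+2k$ term by term (replacing a factor $Y_j$ by $-w_0Y_{j+n}$ preserves order). Substituting $w_0=y_0-z_0$ and expanding $(y_0-z_0)^{k}$ binomially reduces the term to a finite sum of pieces $y_0^{\ell}\int (Y^\mu K_0)(z^{-1}y)\,z_0^{\,m+k-\ell}f_\gamma(z_0)\,K_0(z)\,dz$ with $0\le\ell\le k$. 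Applying the left-invariant operator $Y^\mu$ (with $\mu_0=0$) in the $y$-variable to Lemma~\ref{lem2}, and using that such $Y_j$ kill functions of $y_0$ alone, gives $\int (Y^\mu K_0)(z^{-1}y)\,h(z_0)\,K_0(z)\,dz=\bigl(\int_0^{y_0}h(s)\,ds\bigr)(Y^\mu K_0)(y)$; so each piece equals $y_0^{\ell}\,H(y_0)\,(Y^\mu K_0)(y)$ with $H(y_0)=\int_0^{y_0}s^{\,m+k-\ell}f_\gamma(s)\,ds$.

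The crux is the final order count. The integrand of $H$ is continuous and vanishes at $0$ to order $m+k-\ell$, so $y_0^{\ell}H(y_0)$ vanishes at $0$ to order at least $m+k+1$, and $\phi(y_0):=y_0^{-(m+k+1)}y_0^{\ell}H(y_0)$ extends to an element of $C([0,\infty))$. Writing $Y^\mu K_0=\sum_\delta b_\delta K_\delta$ by \eqref{eqn:YKdecomp}, with $\delta',\delta''\ge0$ and $\order(\delta)=-\order(Y^\mu)$, and absorbing $y_0^{m+k+1}K_\delta=K_{\delta+(m+k+1)e_0}$ (with $e_0$ the unit index in the $y_0$ slot), each piece becomes $\phi(y_0)\sum_\delta b_\delta K_{\delta+(m+k+1)e_0}(y)$, where $\delta+(m+k+1)e_0\in\Z\times\Z_+^{2n}$ and, using $\order(Y^\mu)=-\order(\gamma)+2m+2k$, $\order\bigl(\delta+(m+k+1)e_0\bigr)=-\order(Y^\mu)+2(m+k+1)=\order(\gamma)+2$. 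Summing the finitely many pieces and collecting the continuous coefficient of each $K_\theta$ that appears yields the asserted identity. The one real obstacle is precisely this bookkeeping: a naive count of orders falls short by the order of vanishing of the primitive $H$ at the origin, and the content of the corollary is that this vanishing — equivalently, the order $+2$ that the Duhamel integration in Lemma~\ref{lem2} supplies — makes up exactly the missing two orders.
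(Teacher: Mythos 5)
Your proof is correct and follows essentially the same route as the paper's: apply Lemma~\ref{lem1} to $K_\gamma$, integrate by parts to move the $z$-derivatives onto $K_0(z^{-1}y)$, re-express those as derivatives in $y$ (the paper does this directly via $-\partial_{z''}K_0(z^{-1}y)=\partial_{y''}K_0(z^{-1}y)$ and $-\partial_{z'}K_0(z^{-1}y)=(\partial_{y'}-y_0\partial_{y''})K_0(z^{-1}y)$, which is exactly your $\hat Y_j$ computation packaged differently), apply Lemma~\ref{lem2}, and observe that the extra power of $y_0$ from the primitive supplies the order gain of $2$. Your bookkeeping through $\hat Y^\theta$ and the binomial expansion of $(y_0-z_0)^k$ is a touch more elaborate than the paper's, which keeps everything in terms of $y_0^i\partial_y^\theta$ outside the integral and invokes \eqref{eqn:YKdecomp} at the end, but the mathematical content is the same; your remark on absolute convergence and on where $\order(\gamma)\ge-1$ enters is a welcome clarification.
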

\begin{proof}
We use Lemma \ref{lem1} to write the integral as a finite sum with $\theta_0=0$,
$$
\sum_{j,\theta} b_{j,\theta}\int K_0(z^{-1}y)z_0^jf_\gamma(z_0)\partial_z^\theta K_0(z)\,dz
$$
where $j\ge 0$ since $\order(z_0^j\partial_z^\theta)=-\order(\gamma)\le 1$. 
We integrate by parts in $(z',z'')$, and use that
$$
-\partial_{z''}K_0(z^{-1}y)=\partial_{y''}K_0(z^{-1}y),\quad
-\partial_{z'}K_0(z^{-1}y)=(\partial_{y'}-y_0\partial_{y''})K_0(z^{-1}y).
$$
The integral is thus a finite sum
$$
\sum d_{i,j,\theta}\,y_0^i\partial_y^\theta\int K_0(z^{-1}y)z_0^jf_\gamma(z_0) K_0(z)\,dz,
$$
where the sum is now over $\order(y_0^i\partial_y^\theta)=2j-\order(\gamma)$ and $\theta_0=0$.
We apply Lemma \ref{lem2}, and write
$$
\int_0^{y_0} s^jf_\gamma(s)\,ds=y_0^{j+1}\int_0^1 s^j f_\gamma(st)\,ds.
$$ 
The proof is concluded by applying \eqref{eqn:YKdecomp}.
\end{proof}

We will see that in flow coordinates the Fokker-Planck equation is given by a
differential operator in $y$, with coefficients that are smooth in $(y',y'')$, but with only finitely many bounded derivatives in $y_0$. After an appropriate Taylor expansion in $(y',y'')$, the Fokker-Planck operator is then approximated to a prescribed order by a sum
$$
P_0(y,\partial_y)+\sum_{j=1}^N P_j(y,\partial_y),
$$
where $P_j(y,\partial_y)$ is a differential operator in $(\partial_{y'},\partial_{y''})$ of the form
\begin{equation}\label{eqn:Pjform}
P_j(y,\partial_y)=\sum_{\alpha,\beta} f_{\alpha,\beta}(y_0) y^\alpha \partial_y^\beta
\;:\;\beta_0=0,\;\alpha\ge 0,\;
\order(y^\alpha \partial_y^\beta)=2-j,
\end{equation}
with $f_{\alpha,\beta}\in C^0(\R)$.
The functions $f_{\alpha,\beta}(y_0)$ are in fact smooth, but we have uniform control only on $\sup_{|y_0|< T}|f_{\alpha,\beta}(y_0)|$ for each $T$, and not on derivatives in $y_0$. In our application $|\beta|\le 2$, but that is not needed for the following.

\begin{theorem}\label{thm:solution}
Assume given operators $P_j(y,\partial_y)$ of the form \eqref{eqn:Pjform} for $j\ge 1$. Then there are kernels $K_m$ and $R_N$ of the form
$$
K_m(y)=\sum_{\order(\gamma)=m} b_\gamma(y_0)K_\gamma(y),\quad
R_N(y)=\sum_{\order(\gamma)\ge N-1} r_\gamma(y_0)K_\gamma(y),
$$
with $b_\gamma, r_\gamma\in C(\R)$, so that for each $N$
$$
\Bigl(P_0(y,\partial_y)+\sum_{j=1}^N P_j(y,\partial_y)\Bigr)\Bigl(K_0(y)+\sum_{m=1}^N K_m(y)\Bigr)=\delta(y)+R_N(y).
$$
\end{theorem}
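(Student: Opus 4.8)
The plan is to carry out the classical parametrix iteration, using Corollary~\ref{cor:Kcomp} as the device that inverts $P_0$ at each stage. The first ingredient I would establish is how a perturbation acts on a kernel: if $K=\sum_{\order(\gamma)=m}b_\gamma(y_0)K_\gamma$ with $b_\gamma$ continuous, then $P_jK$ is again of this form with $m$ replaced by $m+j-2$. Indeed, writing $P_j=\sum f_{\alpha,\beta}(y_0)\,y^\alpha\partial_y^\beta$ as in \eqref{eqn:Pjform} and applying it to $b_\gamma(y_0)\,y^\gamma K_0$, the Leibniz rule---together with $\beta_0=0$, so that no $y_0$-derivative falls on a coefficient---produces a finite sum of terms $g(y_0)\,y^{\alpha+\gamma-\mu}\,\partial_y^{\beta-\mu}K_0$ with $0\le\mu\le\beta$, $\mu\le\gamma$ and $g$ continuous. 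Expanding each $\partial_y^{\beta-\mu}K_0$ via \eqref{eqn:YKdecomp} into $K_\rho$'s with $\rho',\rho''\ge0$ and $\order(\rho)=-\order(\beta-\mu)$ exhibits $P_jK$ as a finite sum $\sum g_{\gamma'}(y_0)K_{\gamma'}$ with $(\gamma')',(\gamma')''\ge0$, $g_{\gamma'}$ continuous, and
$$
\order(\gamma')=\order(\alpha+\gamma-\mu+\rho)=\order(\gamma)-\order(y^\alpha\partial_y^\beta)=\order(\gamma)+j-2,
$$
the last step being \eqref{eqn:Pjform}. The same computation applies to $P_0$, which has order $2$ in terms of the $Y_j$, and yields $P_0K$ of the same shape with $\order(\gamma')=m-2$.

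With this in hand I would run the recursion: take $K_0$ as given, and for $1\le m\le N$ set $W_m=\sum_{j=1}^m P_jK_{m-j}$. By the previous step, $W_m$ is a finite sum $\sum_\gamma g_\gamma(y_0)K_\gamma$ with $g_\gamma$ continuous and $\order(\gamma)=m-2$; since $m\ge1$ we have $\order(\gamma)\ge-1$, which is exactly the hypothesis of Corollary~\ref{cor:Kcomp}. I then define
$$
K_m(y)=-\int K_0(z^{-1}y)\,W_m(z)\,dz,
$$
and Corollary~\ref{cor:Kcomp}, applied term by term to $W_m$ (after restricting each $g_\gamma$ to $[0,\infty)$), shows that $K_m=\sum_{\order(\theta)=m}b_\theta(y_0)K_\theta$ with each $b_\theta$ continuous on $[0,\infty)$; since $K_\theta$ is supported in $\{y_0\ge0\}$ the $b_\theta$ may be taken in $C(\R)$ as in the statement. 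Moreover, since $P_0$ is built from the left-invariant fields $Y_j$ and $P_0K_0=\delta$, applying $P_0$ under the integral sign (as in the proof of Lemma~\ref{lem2}) gives $P_0K_m=-\int\delta(z^{-1}y)W_m(z)\,dz=-W_m$, so that
$$
P_0K_m+\sum_{j=1}^m P_jK_{m-j}=0,\qquad 1\le m\le N.
$$

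Finally I would assemble the parametrix identity. Expanding
$$
\Bigl(P_0+\sum_{j=1}^N P_j\Bigr)\Bigl(K_0+\sum_{m=1}^N K_m\Bigr)
=\delta+\sum_{m=1}^N P_0K_m+\sum_{j=1}^N\sum_{m=0}^N P_jK_m
$$
and using the recursion to replace $\sum_{m=1}^N P_0K_m$ by $-\sum_{j\ge1,\ l\ge0,\ j+l\le N}P_jK_l$, every term $P_jK_m$ with $j+m\le N$ cancels and what is left is $\delta+R_N$ with
$$
R_N=\sum_{\substack{1\le j\le N,\ 0\le m\le N\\ j+m>N}}P_jK_m.
$$
By the first step each $P_jK_m$ here is a finite sum $\sum g_{\gamma'}(y_0)K_{\gamma'}$ with $g_{\gamma'}$ continuous and $\order(\gamma')=m+j-2\ge N-1$, so $R_N$ has the asserted form $\sum_{\order(\gamma)\ge N-1}r_\gamma(y_0)K_\gamma$.

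The step I expect to demand the most care is the order bookkeeping that keeps the iteration consistent: verifying at each stage that the kernel passed to Corollary~\ref{cor:Kcomp} has order $\ge-1$---which is exactly the content of $m\ge1$---that the multi-indices $\gamma',\gamma''$ stay nonnegative and the powers of $y_0$ remain in a controlled range under every application of a $P_j$ and every use of \eqref{eqn:YKdecomp}, and that the net order shift under $P_j$ is exactly $j-2$, which is what forces $R_N$ down to order $\ge N-1$. Once the shift relation $\order(\gamma')=\order(\gamma)+j-2$ and its $P_0$ analogue are in place, the rest is formal bookkeeping with finite sums, and continuity of the coefficients in $y_0$ is preserved throughout precisely because $\beta_0=0$ keeps all $y_0$-derivatives off them.
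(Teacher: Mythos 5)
Your proposal is correct and follows essentially the same recursion as the paper's proof: define $K_m$ by convolving $-\sum_{j=1}^m P_jK_{m-j}$ against $K_0$, invoke Corollary~\ref{cor:Kcomp} term by term to see $K_m$ has the stated form of order shift $m$, and read off $R_N=\sum_{j+m\ge N+1}P_jK_m$. The only difference is that you spell out the Leibniz/order-shift bookkeeping $\order(\gamma')=\order(\gamma)+j-2$ and the nonnegativity of $(\gamma')',(\gamma')''$, which the paper leaves implicit.
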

\begin{proof}
We recursively solve $P_0K_m=-\sum_{j=1}^m P_j K_{m-j}$ by letting
$$
K_m(y)=-\int K_0(z^{-1}y)\Bigl(\sum_{j=1}^m P_j K_{m-j}\Bigr)(z)\,dz.
$$
By Corollary \ref{cor:Kcomp} this leads to $K_m$ of the stated form, and we then have
$$
R_N=\sum_{\substack{j,m\le N \\ j+m\ge N+1}} P_j K_m
$$
which yields a sum of terms of the stated form.
\end{proof}
The recursion shows that, for any $T<\infty$, if each $f_{\alpha,\beta}\in C([0,T])$ then so are $b_\gamma$ and $r_\gamma$, and one can bound $\sup_{y_0\le T}|b_\gamma(y_0)|$ and $\sup_{y_0\le T}|r_\gamma(y_0)|$ from upper bounds on $\sup_{y_0\le T}|f_{\alpha,\beta}(y_0)|$ and $T$.


\section{The Fokker-Planck equation}

In this section we represent the Fokker-Planck operator as a sum of squares, and use the flow along $X_0$ to map the fundamental solution of Section \ref{sec:modeldomain} to the $(t,x,\xi)$ variables. To keep notation concise, we will make use of $w=(t,x,\xi)$ and $v=(s,z,\zeta)$ to denote variables on $\R^{2n+1}$. In relating the Fokker-Planck operator to the Lie group frame $Y_j$, $\partial_x$ will be comparable to $Y''=\{Y_j\}_{j=n+1}^{2n}$, and $\partial_\xi$ to $Y'=\{Y_j\}_{j=1}^n$. Thus it is natural to write $w=(w_0,w'',w')$, or $t=w_0$, $x=w''$, and $\xi=w'$.

Consider the following frame of vector fields on $\R^{2n+1}$, where the index $j$ runs over $1\le j\le n$, and the $X_j=\sum_{i=1}^nB_{ji}\cdot\partial_{\xi_i}$ are as in \eqref{eqn:cholesky},
\begin{equation*}
\begin{split}
X_0&=\partial_t-\xi\cdot\partial_x+\Bigl(\nabla_x V(x)+
\gamma \sum_{j=1}^J \Im(\bar\ell_j(x)\nabla_x \ell_j)\Bigr)\cdot\partial_\xi,\\
X_j&=\sum_{i=1}^n B_{ij}\partial_{\xi_i},\\
X_{j+n}&=\sum_{i=1}^n B_{ij}\partial_{x_i}.\rule{0pt}{16pt}
\end{split}
\end{equation*}
The choice of $X_{j+n}$ is such that $[X_0,X_j]=X_{j+n}$.

Let $\Phi_t:\R^{2n}\rightarrow \R^{2n}$ denote the flow map along $X_0$ for time $t$.
Since $X_0$ is divergence free, 
$\Phi_t$ preserves the volume form $dx\wedge d\xi$ for each $t$. From the global $C^\infty$ bounds $|\partial_{x,\xi}^\alpha X_0|\le C_\alpha$ for all $|\alpha|\ge 1$, 
it follows that the map $\Phi_t$ is globally bilipschitz on $\R^{2n}$, with Lipschitz constants bounded uniformly over $t\in[-T,T]$ for each $T>0$.
Additionally, for each $|\alpha|\ge 1$ there are uniform bounds 
\begin{equation}\label{eqn:Phibounds}
\sup_{x,\xi}|\partial_{x,\xi}^\alpha \Phi_t(x,\xi)|\le C_\alpha(T)\quad\text{for all}\;\; t\in[-T,T].\end{equation}

Let $\{Z_j\}_{j=1}^{2n}=(\Phi_{-t})_*\{X_j\}_{j=1}^{2n}$ be the the pullback of $\{X_j\}_{j=1}^{2n}$ under $\Phi_t$. Then  $\{Z_j\}_{j=1}^{2n}$ is a smooth frame  on $\R^{2n}$, and if we set
$$
W(t,x,\xi)=B^T\cdot\Bigl[(\nabla^2 V)(\Phi_t(x,\xi))+\gamma \sum_{j=1}^J \Im\bigl(\nabla \bar\ell_j\otimes\nabla \ell_j\bigr)\Bigr]\cdot (B^T)^{-1},
$$
then for $1\le j\le n$,
$$
\partial_t Z_j=Z_{j+n},\qquad \partial_t Z_{j+n}=-\sum_{k=1}^n W_{jk}(t,x,\xi) Z_k.
$$

Although $|\partial_t W(t,x,\xi)|\sim |\xi\cdot(\nabla_x^3 V)(\Phi_t(x,\xi))|$ is not globally bounded unless $V$ is quadratic, we know from the bounds \eqref{eqn:Vbounds} and \eqref{eqn:Phibounds} that
$$
|\partial_x^\beta\partial_\xi^\alpha W(t,x,\xi)|\le C_{\alpha,\beta}(T)\quad\text{for all}\;\; t\in[-T,T].
$$
From the above equation we deduce that for all $\alpha,\beta$, and $1\le j\le n$,
\begin{equation*}
\sup_{k\le 2}|\partial_t^k \partial_x^\beta\partial_\xi^\alpha Z_j|+
\sup_{k\le 1}|\partial_t^k \partial_x^\beta\partial_\xi^\alpha Z_{j+n}|\le C_{\alpha,\beta}(T)\quad\text{for}\;\;t\in[-T,T].
\end{equation*}

\begin{definition}
We define $C^j_{t}C^\infty_{x,\xi}$ to be the space of functions $f$ on $\R^{2n+1}$ such that, for every $T$ and $\alpha\in\Z_+^{2n}$, there are bounds
$$
\sum_{i=0}^j\,\sup_{|t|\le T}\,\sup_{x,\xi}|\partial_t^i\partial_{x,\xi}^\alpha f(t,x,\xi)|\le C_{\alpha}(T).
$$
Similar notation applies when $(x,\xi)$ is replaced by a variable in $\R^m$.
\end{definition}

Consequently, for $1\le j\le n$,
\begin{align*}
Z_j&=(B^T\partial_\xi)_j+t(B^T\partial_x)_j+A_2(t,x,\xi)\cdot(\partial_\xi,\partial_x),
\\
Z_{j+n}&=(B^T\partial_x)_j+A_1(t,x,\xi)\cdot(\partial_\xi,\partial_x),\rule{0pt}{15pt}
\end{align*}
where $A_j$ is a $(2n)^2$ matrix of functions in $C^j_tC^\infty_{x,\xi}$, with
$$
A_j(t,x,\xi)=\mathcal{O}(|t|^j)\quad\text{as}\quad t\rightarrow 0.
$$
Given a point $(\tilde x,\tilde\xi)$, and $y=(y_0,y',y'')\in \R\times\R^n\times\R^n$, we lastly change variables $(t,x,\xi)$ to $y$ by
$$
t=y_0,\quad \xi=\tilde\xi+B y',\quad x=\tilde x+B y''+\tfrac 12 y_0 B y'.
$$
Then
\begin{align*}
\partial_{y_0}-\thf y'\partial_{y''}&=\partial_t  \\ 
\partial_{y'}+\thf y_0\partial_{y''}&= B^T\partial_\xi +t B^T\partial_x\rule{0pt}{12pt}\\
\partial_{y''}&=B^T\partial_x\rule{0pt}{12pt}
\end{align*}
Finally we replace $(\tilde x,\tilde\xi)$ by $(x,\xi)$ to conclude the following.

\begin{lemma}\label{lem:Xjapprox}
For $w=(t,x,\xi)\in\R^{2n}$ define
$$
\exp_w(y)=\bigl(t+y_0,\Phi_{y_0}(x+B y''+\tfrac 12 y_0 B y',\xi+B y')\bigr).
$$
Then with $Y_j$ the left invariant vector fields defined above, the pullback of $X_j$ to the $y$ coordinates satisfies
$$
X_0=Y_0,
\quad X'=Y'+A_2(y)\cdot(\partial_{y'},\partial_{y''}),
\quad X''=Y''+A_1(y)\cdot(\partial_{y'},\partial_{y''}),
$$
where the $A_j(y)$ are $n\times 2n$ matrix functions with coefficients in $C^j_{y_0}C^\infty_{y',y'',w}$,
and $\partial_{y_0}^iA_j(0,y',y'')=0$ for $0\le i<j$. 
\end{lemma}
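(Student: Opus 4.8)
The plan is to factor $\exp_w$ as a composition of the flow generated by $X_0$ and an affine change of coordinates, and then to read off the three identities from the structure of $Z_j$, $Z_{j+n}$ together with the coordinate relations already displayed just above the lemma. Concretely, write $\exp_w=\mathcal F_w\circ\mathcal L_w$, where $\mathcal L_w$ is the affine map $y\mapsto(t+y_0,\,x+By''+\thf y_0By',\,\xi+By')$ and $\mathcal F_w(s,z,\zeta)=(s,\Phi_{s-t}(z,\zeta))$; composing these reproduces the stated formula for $\exp_w$, and both are diffeomorphisms, so $\exp_w^{*}X_j=\mathcal L_w^{*}\bigl(\mathcal F_w^{*}X_j\bigr)$. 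First I would compute $\mathcal F_w^{*}X_j$. Since $X_0=\partial_t+\tilde X_0$ with $\tilde X_0$ the autonomous generator of $\Phi$, the flow of $X_0$ on $\R^{2n+1}$ is $(s,z,\zeta)\mapsto(s+r,\Phi_r(z,\zeta))$, and $\mathcal F_w$ conjugates it to translation in $s$ alone; hence $\mathcal F_w^{*}X_0=\partial_s$. On the other hand, for $1\le j\le 2n$ the vector field $X_j$ has constant coefficients and no $\partial_s$ component, and $\mathcal F_w$ restricted to a slice $\{s=\mathrm{const}\}$ is $\Phi_{s-t}$ in $(z,\zeta)$, so $\mathcal F_w^{*}X_j$ at a point with time coordinate $s$ equals $Z_j$ evaluated at time $s-t$ — this is exactly the definition $Z_j=(\Phi_{-t})_*X_j$.

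The second step is to apply $\mathcal L_w^{*}$. Under $\mathcal L_w$ the new time is $t+y_0$, so $s-t=y_0$, and the coordinate identities displayed just before the lemma, read with $t$ replaced by $y_0$, give $\mathcal L_w^{*}\partial_s=Y_0$, $\mathcal L_w^{*}(B^T\partial_x)_j=\partial_{y''_j}$, and $\mathcal L_w^{*}(B^T\partial_\xi)_j=\partial_{y'_j}-\thf y_0\partial_{y''_j}$. Feeding the expansions $Z_j=(B^T\partial_\xi)_j+y_0(B^T\partial_x)_j+A_2\cdot(\partial_\xi,\partial_x)$ and $Z_{j+n}=(B^T\partial_x)_j+A_1\cdot(\partial_\xi,\partial_x)$ through $\mathcal L_w^{*}$, the principal parts telescope to $Y_j$ and $Y_{j+n}$, while in the remainders $\partial_\xi$ and $\partial_x$ pull back to fixed linear combinations of $\partial_{y'},\partial_{y''}$ with coefficients polynomial in $y_0$. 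This gives $X_0=Y_0$, $X'=Y'+A_2(y)\cdot(\partial_{y'},\partial_{y''})$, and $X''=Y''+A_1(y)\cdot(\partial_{y'},\partial_{y''})$ with $A_j(y)$ an $n\times 2n$ matrix.

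It remains to transfer the regularity and vanishing assertions. The new $A_j(y)$ is built from the old $A_j(t,x,\xi)\in C^j_tC^\infty_{x,\xi}$ by the substitution $t=y_0$, $(x,\xi)\mapsto(x+By''+\thf y_0By',\,\xi+By')$, which is smooth and of degree one in $y_0$, followed by multiplication by $y_0$-polynomial matrices; the chain rule then places the coefficients in $C^j_{y_0}C^\infty_{y',y'',w}$, the differentiability loss being confined to $y_0$ precisely because the substitution is affine there. For the vanishing, $A_j(t,x,\xi)=\bigO(|t|^j)$ together with $C^j_t$ regularity forces $A_j(0,\cdot)\equiv 0$ and $\partial_t^iA_j(0,\cdot)=0$ for $i<j$; since after the substitution the spatial arguments are affine in $y_0$ and the polynomial prefactors only raise the order in $y_0$, a short chain-rule computation yields $\partial_{y_0}^iA_j(0,y',y'')=0$ for $0\le i<j$.

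The step I expect to be the main obstacle to write cleanly is the bookkeeping in the middle two paragraphs: keeping the several ``time'' quantities $t$, $s$, $s-t$, $y_0$ straight through the factorization, and verifying that the principal parts of $\mathcal L_w^{*}Z_j$ and $\mathcal L_w^{*}Z_{j+n}$ collapse exactly onto $Y_j$ and $Y_{j+n}$ — here the $y_0(B^T\partial_x)_j$ term in $Z_j$ must combine with the $-\thf y_0\partial_{y''_j}$ coming from $\mathcal L_w^{*}(B^T\partial_\xi)_j$ to reproduce the $+\thf y_0\partial_{y''_j}$ in $Y_j$.
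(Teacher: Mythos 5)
Your proposal is correct and follows the same route as the paper: the paper sets up the flow $\Phi_t$, defines $Z_j=(\Phi_{-t})_*X_j$ with expansions $Z_j=(B^T\partial_\xi)_j+t(B^T\partial_x)_j+A_2\cdot(\partial_\xi,\partial_x)$, $Z_{j+n}=(B^T\partial_x)_j+A_1\cdot(\partial_\xi,\partial_x)$, then applies the affine change of coordinates, exactly your $\mathcal L_w^{*}\circ\mathcal F_w^{*}$ factorization written out informally. Your explicit factorization of $\exp_w$ and the final sign check $-\thf y_0\partial_{y''_j}+y_0\partial_{y''_j}=+\thf y_0\partial_{y''_j}$ fill in the details the paper leaves to the reader.
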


\begin{remark}
$A_j(y)$ depends on $(x,\xi)$, although not $t$. 
Differentiating $\exp_w(y)$ in $(x,\xi)$ is equivalent to differentiating in $(y',y'')$, so we have the same bounds for derivatives of $A_j$ in $(x,\xi)$.

\begin{definition}\label{def:thetavw}
Let $\Theta_w(v)$ be the inverse to $\exp_w(y)$:
$$
y=\Theta_w(v)\quad\Leftrightarrow\quad \exp_w(y)=v.
$$
\end{definition}
With $w=(t,x,\xi)$ and $v=(s,z,\zeta)$, this equates to
$$
y_0=s-t,\quad (B y',B y''+\tfrac 12 (s-t)B y')=\Phi_{t-s}(z,\zeta)-(x,\xi).
$$
Thus if we let $\Phi_{s-t}(z,\zeta)=(z_{t-s},\zeta_{t-s})$, then
\begin{equation*}
\Theta_w(v)=\bigl(s-t, B^{-1}(\xi-\zeta_{t-s}),B^{-1}(x-z_{t-s}+\thf(t-s)(\xi-\zeta_{t-s}))\bigr)
\end{equation*}
Lemma \ref{lem:Xjapprox} states that, with $X_j$ acting in $v$ for fixed $w$,
\begin{align*}
X'\bigl(f(\Theta_w(v)\bigr)&=\bigl(Y'f+A_2(y)\cdot(\partial_{y'}f,\partial_{y''}f)\bigr)(\Theta_w(v)),\\
X''\bigl(f(\Theta_w(\cdot)\bigr)&=\bigl(Y''f+A_1(y)\cdot(\partial_{y'}f,\partial_{y''}f)\bigr)(\Theta_w(\cdot)).
\end{align*}
With $X_j$ acting in $w$ for fixed $v$, we have the more direct connection
\begin{equation}\label{eqn:XequalsY}
X_j\bigl(f(-\Theta_w(v)\bigr)=(Y_jf)(-\Theta_w(v)),\quad j\ge 1.
\end{equation}

Since $A_2(0,y',y'')=\partial_{y_0}A_2(0,y',y'')=0$, we can write
$A_2(y)=y_0^2 A_0(y)$, and similarly $A_1(y)=y_0 A_0(y)$, in each case with $A_0\in C^0_{y_0}C^\infty_{y',y'',w}$.
\end{remark}

In exponential coordinates at a given point $w$, for $1\le i\le 2n$  we take a Taylor expansion to order $N$ in $(y',y'')$ about $y'=y''=0$ to write
\begin{equation}\label{eqn:Xjexpansion}
X_i=Y_i+\sum_{k=1}^{2n}\biggl(\sum_{|\alpha|\le N+2} c_{i,\alpha,k}(y_0)\,y^\alpha\partial_k
+\sum_{|\alpha|=N+3}r_{i,\alpha,k}(y)\,y^\alpha\partial_k\biggr)
\end{equation}
Here $\alpha_0=2$ if $1\le i\le n$, and $\alpha_0=1$ if $n+1\le i\le 2n$, so 
$\order(y^\alpha\partial_k)\le \order(Y_i)-2$ in the sums. 
The functions $c_{i,\alpha,k}(y_0)\in C^0_{y_0}C^\infty_w$ are continuous in $y_0$. They depend smoothly on $w$, but satisfy uniform bounds on $[-T,T]$ for each $T$.
Also, $r_{j,\alpha,k}(y)\in C^0_{y_0}C^\infty_{y',y'',w}$. 
Note that $\order(y^\alpha\partial_k)\le -N$ if $|\alpha|\ge N+3$.

Expressing $\partial_k$ for $k\ge 1$ in terms of $Y_k$ with $k\ge 1$ yields an expansion with similar $c_{i,\alpha,k}(y_0)$ and $r_{j,\alpha,k}(y)$ and $\order(y^\alpha Y_k)\le \order(Y_i)-2$,
$$
X_i=Y_i+\sum_{k=1}^{2n}\biggl(\sum_{|\alpha|\le N+2} c_{i,\alpha,k}(y_0)\,y^\alpha Y_k
+\sum_{|\alpha|=N+3}r_{i,\alpha,k}(y)\,y^\alpha\partial_k\biggr).
$$
This relation is symmetric, so we have the reverse expansion with different $c_{i,\alpha,k}\in C^0C^\infty_w$ and $r_{i,\alpha,k}\in C^0_{y_0}C^\infty_{y',y'',w}$, and $\order(y^\alpha X_k)\le \order(Y_i)-2$,
\begin{equation}\label{eqn:Yjexpansion}
Y_i=X_i+\sum_{k=1}^{2n}\biggl(\sum_{|\alpha|\le N+2} c_{i,\alpha,k}(y_0)\,y^\alpha X_k
+\sum_{|\alpha|=N+3}r_{i,\alpha,k}(y)\,y^\alpha\partial_k\biggr).
\end{equation}

Let $P_j$ be the terms of order $2-j$ in the expansion of $X_0-\sum_{i=1}^n X_i^2$ using \eqref{eqn:Xjexpansion}, where $P_0=Y_0-\sum_{j=1}^n Y_j^2$.
Following Theorem \ref{thm:solution}  we can iteratively construct $K_m(w,y)$ of the form
$$
K_m(w,y)=\sum_{\order(\gamma)=m}b_\gamma(y_0,w)\, K_\gamma(y),
$$
so that, with $X_j$ written as a differential operator in $y$,
$$
\Bigl(X_0-\sum_{i=1}^n X_i^2\Bigr)\Bigl(K_0(y)+\sum_{m=1}^N K_m(w,y)\Bigr)=\delta(y)+R_N(w,y),
$$
where $\order(R_N)\le 1-N$, and $b_\gamma\in C^0_{y_0}C^\infty_w$ and $r_\gamma(y_0,y,w)\in C^0_{y_0}C^\infty_{y',y'',w}$. Both are independent of $t$.

We first use this to produce a right inverse for $X_0-\sum_{i=1}^n X_i^2$ modulo an operator of any given negative order.

Given $N$, let $K(v,y)=\sum_{m=0}^N K_m(v,y)$, and define
$$
T_Kf(w)=c_B\int K(v,\Theta_v(w))\,f(v)\,\chi(\Theta_v(w))\,dv,
$$
where $c_B^{-1}=|D_y\exp_v(y)|_{y=0}=\det(B)^2$ and $\chi(y)\in C_c^\infty(\R^{2n+1})$ with 
$\chi(y)=1$ for $|y|<\frac 12$ and $\chi(y)=0$ for $|y|>1$.

Then, with the $X_i$ acting in $w=(t,x,\xi)=\exp_v(y)$, we can write
\begin{equation}\label{eqn:rightinverse}
\Bigl(X_0-\sum_{i=1}^n X_i^2\Bigr)T_Kf(w)=f(w)+\int R_N(v,\Theta_v(w))f(v)\,dv,
\end{equation}
Here $R_N(v,y)$ is a time-independent finite sum of the form
\begin{equation}\label{eqn:RNform}
R_N(v,y)=\!\!\sum_{\order(\gamma)\ge N-1} r_\gamma(v,y)K_\gamma(y),\;\; r_\gamma(v,y)\in C^0_{y_0}C^\infty_{y',y'',v}.
\end{equation}
We will establish smoothing estimates for such $R_N$ in Section \ref{sec:opbounds}.

By hypoellipticity of $X_0-\sum_{i=1}^n X_i^2$ and its transpose, a right parametrix is necessarily also a left parametrix (with care taken concerning the order of the remainder), but we can easily modify the above to directly produce a left parametrix.

For each $N$ and $w$ we want to produce a kernel $K(w,\cdot)$ such that
\begin{multline*}
\int K(w,-y)\Bigl(X_0 f-\sum_{i=1}^n X_i^2 f\Bigr)(\exp_w(y))\,\chi(y)\,dy
\\=
f(w)+\int R_N(w,-y)f(\exp_w(y))\,dy.
\end{multline*}
If we expand $X_0-\sum_{i=1}^n X_i^2$ to order $N$ as above,
we seek kernels $K_m(y)$ of order $-m$ as in 
Theorem \ref{thm:solution} so that, with $P_j^t$ the transpose of $P_j$ with respect to $dy$,
$$
\Bigl(P_0^t(-y,-\partial_y)+\sum_{j=1}^N P_j^t(-y,-\partial_y)\Bigr)\Bigl(K_0(y)+\sum_{m=1}^N K_m(y)\Bigr)=\delta(y)+R_N(y).
$$
Observe that
$$
P_0^t(-y,-\partial_y)=
\Bigl(\partial_0+\thf\sum_{i=1}^n y_i\partial_{i+n}\Bigl)-
\sum_{i=1}^n\Bigl(\partial_i-\thf y_0\partial_{i+n}\Bigr)^2,
$$
which is the conjugation of $P_0(y,\partial_y)$ under $y''\rightarrow -y''$. This conjugation preserves both $K_0(y)$ and $\delta(y)$, hence $P_0^t(-y,-\partial_y)K_0(y)=\delta(y)$. 
This conjugation also preserves the order of each $P_j$. Thus if we set
$$
\tilde K_m(y)=K_m(y_0,y',-y''),
$$
then with $\tilde P_j^t$ the conjugation of $P_j^t$ under $y''\rightarrow -y''$, we require that
$$
\Bigl(P_0(y,\partial_y)+\sum_{j=1}^N \tilde P_j^t(-y,-\partial_y)\Bigr)\Bigl(K_0(y)+\sum_{m=1}^N \tilde K_m(y)\Bigr)=\delta(y)+\tilde R_N(y).
$$
We can iteratively construct such kernels following Theorem \ref{thm:solution}, noting that $\tilde K_\gamma(y)=(-1)^{|\alpha''|}K_\gamma(y)$. We summarize the result in the following.

\begin{theorem}
For each $m\ge 1$ there are a finite collection of $t$-independent functions $b_\gamma(w,y_0)\in C_{y_0}C^\infty_w(\R^{2n+2})$ where 
$\order(\gamma)=m$, such that
$$
K(w,y)=K_0(y)+\sum_{m=1}^N\sum_{\order(\gamma)=m}b_\gamma(w,y_0)\,K_\gamma(y),
$$
satisfies
\begin{multline*}
\int K(w,-y)\Bigl(X_0 f-\sum_{i=1}^n X_i^2 f\Bigr)(\exp_w(y))\,\chi(y)\,dy
\\=
f(w)+\int R_N(w,-y)f(\exp_w(y))\,dy,
\end{multline*}
where $R_N(w,y)$ is of the form \eqref{eqn:RNform}.
\end{theorem}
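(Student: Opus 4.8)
\smallsection{Proof plan}
The plan is to run the same heat–kernel recursion as in the construction of the right inverse leading to \eqref{eqn:rightinverse}, but with the roles of operator and kernel transposed, and then to reduce the transposed problem back to Theorem~\ref{thm:solution} via the conjugation $y''\to -y''$. Fix $w$ and set $g(y)=f(\exp_w(y))$; by Lemma~\ref{lem:Xjapprox} the quantity $\bigl(X_0f-\sum_i X_i^2f\bigr)(\exp_w(y))$ equals $Lg(y)$, where $L=X_0-\sum_i X_i^2$ is regarded as a differential operator in $y$ through the expansions \eqref{eqn:Xjexpansion}. I would substitute $y\mapsto -y$ in $\int K(w,-y)(Lg)(y)\,\chi(y)\,dy$ and integrate by parts with respect to $dy$. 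Since $\chi\equiv 1$ near $0$ and every $K_\gamma$ is smooth away from $y=0$, the terms in which a derivative falls on $\chi$, together with $(1-\chi)$ times the remaining terms, are smooth and compactly supported in an annulus $\tfrac12\le|y|\le 1$, hence harmless and absorbed into $R_N$ (or, if one prefers, handled directly by the operator bounds of Section~\ref{sec:opbounds}). What is left is to produce a kernel $K_0(y)+\sum_{m=1}^N K_m(y)$ with
$$
\Bigl(P_0^t(-y,-\partial_y)+\sum_{j=1}^N P_j^t(-y,-\partial_y)\Bigr)\Bigl(K_0+\sum_{m=1}^N K_m\Bigr)=\delta(y)+R_N(y),
$$
where $P_j$ is the part of $L$ of order $2-j$ in the grading and $P_j^t$ its $dy$-transpose.

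The crucial observation to record next is that $P_0^t(-y,-\partial_y)=\bigl(\partial_0+\thf\sum_i y_i\partial_{i+n}\bigr)-\sum_i\bigl(\partial_i-\thf y_0\partial_{i+n}\bigr)^2$ is precisely the conjugate of $P_0(y,\partial_y)$ under $y''\to -y''$, and that $P_0$, $K_0$, and $\delta$ are each invariant under that conjugation; this is what allows the recursion to be driven by the \emph{same} fundamental solution $K_0$ and to stay inside the class of kernels closed under composition in Corollary~\ref{cor:Kcomp}. Conjugating the displayed identity by $y''\to -y''$ and writing $\tilde K_m(y)=K_m(y_0,y',-y'')$ turns it into
$$
\Bigl(P_0(y,\partial_y)+\sum_{j=1}^N \tilde P_j^t(-y,-\partial_y)\Bigr)\Bigl(K_0+\sum_{m=1}^N \tilde K_m\Bigr)=\delta(y)+\tilde R_N(y).
$$
Then I would verify that each $\tilde P_j^t(-y,-\partial_y)$, $j\ge 1$, has exactly the structure \eqref{eqn:Pjform}: it is a differential operator of order $2-j$ carrying no $\partial_{y_0}$ (the only $\partial_{y_0}$ in $L$ sits in $X_0=Y_0$, hence already inside $P_0$), with coefficients $f_{\alpha,\beta}(y_0)$ that are continuous in $y_0$ and smooth in $w$, with bounds uniform on $|y_0|\le T$ inherited from the $c_{i,\alpha,k}$ of \eqref{eqn:Xjexpansion}; they are $t$-independent because the $A_j(y)$ are. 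Theorem~\ref{thm:solution} then produces $\tilde K_m$ and $\tilde R_N$ of the stated shape, and unwinding $K_m(y)=\tilde K_m(y_0,y',-y'')$ via $\tilde K_\gamma(y)=(-1)^{|\alpha''|}K_\gamma(y)$ gives $K_m(w,y)=\sum_{\order(\gamma)=m}b_\gamma(w,y_0)K_\gamma(y)$ and $R_N(w,y)$ of the form \eqref{eqn:RNform}. Restoring the dependence on $w$ and using $g(0)=f(w)$ yields the asserted identity.

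I do not expect a single hard estimate in this argument; the main care point is the bookkeeping that keeps the entire construction inside $C^0_{y_0}C^\infty_{y',y'',w}$, i.e.\ never differentiating a coefficient in $y_0$, which is forced because $\partial_{y_0}$ of the frame $Z_j$ (equivalently $\partial_t W$) is not bounded unless $V$ is quadratic. This succeeds precisely because $X_0=Y_0$ with no error term and $X_0$ enters $L$ only linearly with coefficient $1$, so that when the $X_i$ with $i\ge 1$ are squared no $\partial_{y_0}$ ever hits a $y_0$-dependent coefficient, and because both transposition and the $y''\to -y''$ conjugation act within the order grading and commute with multiplication by functions of $y_0$. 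The secondary point is to check that the transpose-plus-$(-y,-\partial_y)$ operation lines up with the conjugation so that $P_0$ is left literally unchanged, which is immediate from the explicit formula for $P_0^t(-y,-\partial_y)$ quoted above.
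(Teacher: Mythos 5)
Your proposal reproduces the paper's own argument: fix $w$, integrate by parts against $dy$ to pass to $P_j^t(-y,-\partial_y)$, observe that $P_0^t(-y,-\partial_y)$ is the conjugate of $P_0(y,\partial_y)$ under $y''\to -y''$ (which also fixes $K_0$ and $\delta$), conjugate the whole expansion, apply Theorem~\ref{thm:solution}, and unwind via $\tilde K_\gamma=(-1)^{|\gamma''|}K_\gamma$. This is exactly the route taken in the text preceding the theorem, and your added remarks — that $\beta_0=0$ in the $P_j$ with $j\ge 1$ forces the transpose never to hit a $y_0$-dependent coefficient (thereby keeping everything in $C^0_{y_0}C^\infty_w$), and that the $\chi$-derivative and $(1-\chi)$ terms are supported in the annulus $\tfrac12\le|y|\le 1$ and hence harmless — correctly fill in points the paper leaves implicit. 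No substantive deviation from the paper's proof.
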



\section{The case $0<\eps\le 1$}

In this section we construct parametrices for $X_0-\eps^2\sum_{i=1}^n X_i^2$, $\eps\in(0,1]$. 
Consider the scaled kernels
$$
K_{\gamma,\eps}(y)=\eps^{-2n}K_\gamma(y_0,\eps^{-1}y',\eps^{-1}y'').
$$
The scaling is such that $K_{0,\eps}(y_0,\cdot)$ converges to $\delta(y',y'')$ as $y_0\rightarrow 0+$, and
$$
\Bigl(Y_0-\eps^2\sum_{j=1}^n Y_j^2\Bigr)K_{0,\eps}(y)=\delta(y).
$$
More generally,
\begin{equation}\label{eqn:epsgain}
y^\alpha(\partial_{y_0},\eps\partial_{y'},\eps\partial_{y''})^\beta K_{\gamma,\eps}(y)=
\eps^{|\alpha'|+|\alpha''|}\bigl(y^\alpha\partial_y^\beta K_\gamma\bigr)_\eps(y).
\end{equation}
We note that, with the notation of Lemma \ref{lem2}
$$
\int K_{0,\eps}(z^{-1}y)f(z_0)K_{0,\eps}(z)\,dz=F(y_0)K_{0,\eps}(y),
$$
which can be seen by a change of variables and observing that $\eps$-scaling intertwines with group multiplication.

The expansion of $X_0-\eps^2\sum_{j=1}^n X_j^2$ in exponential coordinates $y$ is
$$
\biggl(Y_0-\eps^2\sum_{i=1}^n Y_i^2\biggr)+\sum_{j\ge 1}P_j(\eps,y,\partial_y)
$$
where $P_j$ is a sum of terms with $\order(y^\alpha\partial_y^\beta)=2-j$ of the form
\begin{equation}\label{eqn:Pjepsform}
P_j(\eps,y,\partial_y)=\sum_{|\beta|\le 2}\sum_{\alpha>0} \eps^{2-|\beta|}f_{\alpha,\beta}(y_0)y^\alpha 
(\eps\partial_{y'})^{\beta'}(\eps\partial_{y''})^{\beta''}
\end{equation}
Here $f_{\alpha,\beta}(y_0)\in C^0_{y_0}C^\infty_{w}$. 
The remainder term of order $1-N$ is of the same form with coefficients $f_{\alpha,\beta}(y)\in C^0_{y_0}C^\infty_{y',y'',w}$.

By \eqref{eqn:epsgain}, the iterative procedure of Theorem \ref{thm:solution} yields the following. 
\begin{theorem}
Suppose given differential operators $P_j(\eps,y,\partial_y)$ of the form \eqref{eqn:Pjepsform}.
Then there are kernels $K_m(\eps,y)$, each a finite sum of the form
\begin{equation}\label{eqn:Kmepsform}
K_m(\eps,y)=\sum_{j\ge 0}\sum_{\order(\gamma)=m}b_{j,\gamma}(y_0)\,\eps^{j}K_{\gamma,\eps}(y)
\end{equation}
so that
\begin{multline*}
\Bigl(Y_0-\eps^2\sum_{j=1}^n Y_j^2+\sum_{j=1}^N P_j(\eps,y,\partial_y)\Bigr)\Bigl(K_{0,\eps}(y)+\sum_{m=1}^N K_m(\eps,y)\Bigr)
\\
=\delta(y)+R_N(\eps,y),
\end{multline*}
where $R_N(\eps,y)$ is a finite sum of the form \eqref{eqn:Kmepsform} with $\order(\gamma)\ge N-1$.
\end{theorem}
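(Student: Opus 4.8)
The statement is the $\eps$-dependent analogue of Theorem~\ref{thm:solution}, and the plan is to rerun that recursion with the scaled kernel $K_{0,\eps}$ in place of $K_0$, tracking the powers of $\eps$ by means of \eqref{eqn:epsgain}. Since $\bigl(Y_0-\eps^2\sum_{j=1}^n Y_j^2\bigr)K_{0,\eps}=\delta$, the structure of the recursion and the way the orders $\order(\gamma)$ propagate are exactly as in Theorem~\ref{thm:solution}; the only genuinely new points are that the composition identities of Section~\ref{sec:modeldomain} survive the scaling, and that every power of $\eps$ produced along the way is nonnegative.

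First I would record the scaled composition identities. Let $S_\eps(y)=(y_0,\eps^{-1}y',\eps^{-1}y'')$; a direct check from the explicit group law of Section~\ref{sec:modeldomain} shows that $S_\eps$ is a group automorphism. Since $K_{\gamma,\eps}=\eps^{-2n}\,(K_\gamma\circ S_\eps)$, the substitution $z\mapsto S_\eps(z)$ reduces any convolution against $K_{0,\eps}$ to the corresponding convolution against $K_0$, with no net power of $\eps$ left over. Hence the scaled Lemma~\ref{lem2} holds (as already noted in the text), and so does the scaled Corollary~\ref{cor:Kcomp}: for $\order(\gamma)\ge-1$ and $f\in C([0,\infty))$,
\[
\int K_{0,\eps}(z^{-1}y)\,f(z_0)\,K_{\gamma,\eps}(z)\,dz=\sum_\theta f_\theta(y_0)\,K_{\theta,\eps}(y),\qquad \order(\theta)=\order(\gamma)+2,
\]
with $f_\theta\in C([0,\infty))$; this $\eps$-neutrality is what makes the bookkeeping below work.

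Next I would run the recursion. Set $K_0(\eps,\cdot)=K_{0,\eps}$, solve recursively $\bigl(Y_0-\eps^2\sum_{j=1}^n Y_j^2\bigr)K_m(\eps,\cdot)=-\sum_{j=1}^m P_j(\eps,\cdot,\partial_y)K_{m-j}(\eps,\cdot)$ via
\[
K_m(\eps,y)=-\int K_{0,\eps}(z^{-1}y)\,\Bigl(\sum_{j=1}^m P_j(\eps,\cdot,\partial_y)K_{m-j}(\eps,\cdot)\Bigr)(z)\,dz,
\]
and set $R_N(\eps,\cdot)=\sum_{j,m\le N,\ j+m\ge N+1}P_j(\eps,\cdot,\partial_y)K_m(\eps,\cdot)$, so that the identity of the theorem holds just as in Theorem~\ref{thm:solution}. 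One then proves by induction on $m$ that $K_m$ has the form \eqref{eqn:Kmepsform} with $\order(\gamma)=m$. Applying a typical summand $\eps^{2-|\beta|}f_{\alpha,\beta}(y_0)\,y^\alpha(\eps\partial_{y'})^{\beta'}(\eps\partial_{y''})^{\beta''}$ of $P_j$, for which $\beta_0=0$, $|\beta|\le 2$, and $\order(y^\alpha\partial_y^\beta)=2-j$, to a term $\eps^{j'}b_{j',\gamma}(y_0)\,K_{\gamma,\eps}$ of $K_{m-j}$, and invoking \eqref{eqn:epsgain} together with \eqref{eqn:YKdecomp}, one obtains a finite sum of terms $\eps^{k}\,c(y_0)\,K_{\gamma'',\eps}$ with $c\in C^0_{y_0}C^\infty_w$ and with $\order(\gamma'')$ evolving precisely as in Theorem~\ref{thm:solution}, the $\eps$-exponent being
\[
k=(2-|\beta|)+j'+(|\alpha'|+|\alpha''|)\ge 0,
\]
where the inequality is exactly the place the constraint $|\beta|\le 2$ enters. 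Applying the scaled Corollary~\ref{cor:Kcomp}, which leaves these $\eps$-exponents unchanged, then yields $K_m$ of the asserted form. Running the same computation on $R_N$ produces a finite sum of the same type with $\order(\gamma)\ge N-1$, nonnegative powers of $\eps$, and coefficients in $C^0_{y_0}C^\infty_{y',y'',w}$, the dependence on $(y',y'')$ entering only through the order-$(1-N)$ remainder operators in \eqref{eqn:Pjepsform}.

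The step that needs genuine care, and which I view as the main obstacle, is the accounting of powers of $\eps$: one must check simultaneously that every exponent that appears is nonnegative and that passing through the convolution against $K_{0,\eps}$ is $\eps$-neutral. The first rests on the structural restriction $|\beta|\le 2$ built into \eqref{eqn:Pjepsform}, so that $\eps^{2-|\beta|}$ is a nonnegative power, together with the further nonnegative gain $\eps^{|\alpha'|+|\alpha''|}$ supplied by \eqref{eqn:epsgain}; the second follows from the automorphism property of $S_\eps$, which allows the whole recursion to close on the scaled building blocks $K_{\gamma,\eps}$. The remaining features, namely continuity in $y_0$ of the coefficients, their smoothness and uniform bounds in $w$, and the propagation of the orders $\order(\gamma)$, are inherited verbatim from Theorem~\ref{thm:solution}.
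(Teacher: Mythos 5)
Your proposal is correct and follows the same route the paper intends: the paper proves this theorem in a single sentence, invoking the iterative procedure of Theorem~\ref{thm:solution} together with the scaling identity \eqref{eqn:epsgain}, and you have filled in exactly the verification that makes that one-liner legitimate — the automorphism property of $S_\eps(y)=(y_0,\eps^{-1}y',\eps^{-1}y'')$ (which the paper itself flags just before the theorem, as ``$\eps$-scaling intertwines with group multiplication''), the consequent $\eps$-neutrality of convolution against $K_{0,\eps}$ (the scaled Lemma~\ref{lem2} and Corollary~\ref{cor:Kcomp}), and the nonnegativity of the $\eps$-exponent $k=(2-|\beta|)+j'+|\alpha'|+|\alpha''|\ge 0$, which uses $|\beta|\le 2$ from \eqref{eqn:Pjepsform} and \eqref{eqn:epsgain}. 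One minor imprecision: in your closing remark about $R_N$, the theorem as stated supposes only operators $P_j$ of the form \eqref{eqn:Pjepsform} with $f_{\alpha,\beta}(y_0)$ depending on $y_0$ alone, so the coefficients $b_{j,\gamma}$ in the resulting $R_N$ are also functions of $y_0$ only, consistent with \eqref{eqn:Kmepsform}; the $(y',y'')$-dependent remainder coefficients you mention belong to the subsequent application to $X_0-\eps^2\sum X_i^2$, not to this theorem itself.
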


We can use this to give left and right parametrices for $X_0-\eps^2\sum_{i=1}^n X_i^2$. 
We state the result for the left parametrix, observing that the transpose of an operator $P_j$ of the form \eqref{eqn:Pjepsform} is of the same form.

\begin{theorem}\label{thm:epsleftinverse}
For each $m\ge 1$ there are a finite collection of $t$-independent functions $b_{j,\gamma}(w,y_0)\in C^0_{y_0}C^\infty_w$ where 
$\order(\gamma)=m$, such that
$$
K(\eps,w,y)=K_{0,\eps}(y)+\sum_{m=1}^N\sum_{\order(\gamma)=m} b_{j,\gamma}(w,y_0)\,\eps^j K_{\gamma,\eps}(y),
$$
satisfies
\begin{multline*}
\int K(\eps,w,-y)\Bigl(X_0 f-\eps^2\sum_{i=1}^n X_i^2 f\Bigr)(\exp_w(y))\,\chi(y)\,dy
\\=
f(w)+\int R_N(\eps,w,-y)f(\exp_w(y))\,dy,
\end{multline*}
where $R_N(\eps,w,y)$ is a finite sum over $j\ge 0$ and $\order(\gamma)\ge N-1$,
$$
R_N(\eps,w,y)=\sum r_{j,\gamma}(w,y)\,\eps^{j}K_{\gamma,\eps}(y),
$$
with $r_{j,\gamma}\in C^0_{y_0}C^\infty_{y',y'',w}$.
\end{theorem}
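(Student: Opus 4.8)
The plan is to transcribe the unscaled left‑parametrix construction into the $\eps$‑dependent setting and then feed the outcome into the preceding theorem, i.e.\ the $\eps$‑analogue of Theorem~\ref{thm:solution}. First I would reduce the claim to a differential equation for the kernel. By Lemma~\ref{lem:Xjapprox}, pulling $X_0-\eps^2\sum_{i=1}^nX_i^2$ back by $\exp_w$ turns it into $Y_0-\eps^2\sum_{i=1}^nY_i^2+\sum_{j\ge1}P_j(\eps,y,\partial_y)$, with each $P_j$ of the form \eqref{eqn:Pjepsform} and, after Taylor expansion to order $N$ in $(y',y'')$, a remainder of order $1-N$ of the same type but with coefficients in $C^0_{y_0}C^\infty_{y',y'',w}$. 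Writing $f(\exp_w(y))$ for the pullback of $f$ and integrating by parts in $y$, the asserted identity becomes equivalent, up to the contribution of derivatives landing on $\chi$, to
\[
\Bigl(Y_0-\eps^2\sum_{i=1}^nY_i^2+\sum_{j\ge1}P_j(\eps,\cdot)\Bigr)^{t}(-y,-\partial_y)\,K(\eps,w,y)=\delta(y)+R_N(\eps,w,y).
\]
Those $\chi$‑terms are supported in $\tfrac12<|y|<1$, away from the singularity of $K(\eps,w,-y)$, so they contribute a smooth, small error that can be absorbed into $R_N$ or removed by enlarging $N$.

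Next I would identify the transformed operator. Since $Y_0^t=-Y_0$ and $Y_j^t=-Y_j$, the leading transpose is $-Y_0-\eps^2\sum_{j=1}^nY_j^2$; substituting $(-y,-\partial_y)$ and then conjugating by $y''\mapsto-y''$ returns it to $Y_0-\eps^2\sum_{i=1}^nY_i^2$, just as in the unscaled case, because the $\eps$‑scaling acts only in $(y',y'')$ and hence commutes with both sign changes. The lower‑order pieces obtained the same way, say $\tilde P_j(-y,-\partial_y)$, are again of the form \eqref{eqn:Pjepsform}: transposition preserves $\order$, and a term $\eps^{2-|\beta|}f(y_0)\,y^\alpha(\eps\partial_{y'})^{\beta'}(\eps\partial_{y''})^{\beta''}$ transposes to a finite sum of such terms, since every $\eps\partial$ that falls on the monomial $y^\alpha$ simultaneously lowers $|\beta|$ by one and supplies the compensating power of $\eps$; because $\beta_0=0$ the transpose differentiates only in $(y',y'')$, so $\alpha_0$ is unchanged and the coefficients remain in $C^0_{y_0}C^\infty_w$, and neither of the two sign changes affects any of this. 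The order‑$(1-N)$ remainder transforms in the same fashion.

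With the transformed data at hand I would invoke the preceding theorem with $\tilde P_j(-y,-\partial_y)$ in the role of $P_j$, producing kernels $\tilde K_m(\eps,y)=\sum_{j,\,\order(\gamma)=m}b_{j,\gamma}(y_0)\,\eps^jK_{\gamma,\eps}(y)$ together with a remainder $\tilde R_N$ of the form \eqref{eqn:Kmepsform} with $\order(\gamma)\ge N-1$. I then undo the conjugation by setting $K_m(\eps,y)=\tilde K_m(\eps,y_0,y',-y'')$; since $K_{\gamma,\eps}(y_0,y',-y'')=(-1)^{|\gamma''|}K_{\gamma,\eps}(y)$ and $K_{0,\eps}$ is even in $y''$, the resulting $K(\eps,w,y)=K_{0,\eps}(y)+\sum_m K_m(\eps,y)$ has the stated form, solves the displayed transposed equation, and therefore yields the left parametrix. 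The dependence on $w$ is smooth, and the coefficients are independent of $t=w_0$ because $\exp_w$ depends on $t$ only through the translation $y_0\mapsto t+y_0$ in its first coordinate, so the pulled‑back operator and every step of the iteration are invariant under translation in $t$; likewise $R_N(\eps,w,y)=\tilde R_N(\eps,w,y_0,y',-y'')$ has the asserted form.

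The one point that really requires care — all else being a verbatim repetition of the $\eps=1$ argument with $K_\gamma$ replaced by $K_{\gamma,\eps}$ — is the bookkeeping in the second paragraph: verifying that transposition together with the two sign flips keeps every operator in the class \eqref{eqn:Pjepsform}, with the power of $\eps$ still exactly $2-|\beta|$ and the $C^0_{y_0}C^\infty_w$ regularity of the coefficients intact, and, secondarily, that the cutoff contributions are genuinely negligible of the required order.
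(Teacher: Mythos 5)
Your proposal follows essentially the same path as the paper: the paper itself gives no separate proof of Theorem~\ref{thm:epsleftinverse}, merely remarking that it follows from the $\eps$-version of Theorem~\ref{thm:solution} together with the observation that the transpose of an operator of the form~\eqref{eqn:Pjepsform} is of the same form, combined with the conjugation $y''\mapsto -y''$ already set up in Section~3 for the $\eps=1$ left parametrix. You spell out correctly the Leibniz bookkeeping showing that each $\eps\partial$ landing on $y^\alpha$ lowers $|\beta|$ by one and supplies the compensating factor of $\eps$, and you note correctly that $t$-independence follows because $\exp_w$ sees $t$ only through translation of $y_0$ — all consistent with the paper.

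One small imprecision worth noting: the contributions from derivatives of $\chi$ are not pointwise ``small'' on the annulus $\tfrac12<|y|<1$ (the kernel $K_{\gamma,\eps}$ is of size $\eps^{-2n}$ where $y',y''$ are near $0$ and $|y_0|\sim 1$). To fit these terms into the $R_N$ format one should split the annulus: where $|y_0|$ is bounded below one raises $\gamma_0$ and absorbs the negative power of $y_0$ into $r_{j,\gamma}$, and where $|y_0|$ is small but $|y'|+|y''|$ is not, the kernel is $\mathcal O(\eps^\infty)$ by Gaussian decay. The paper glosses over this as well, so it is not a gap in your argument relative to the source, but ``smooth, small error'' overstates what is true without the splitting.
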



\section{Operator bounds}\label{sec:opbounds}

We start by observing the following, which holds by a change of variables,
\begin{equation}\label{eqn:intbound}
\int \eps^{-2n}\bigl|K_{\gamma,\eps}(y)\bigr|\,dy'\,dy''
= c_\gamma \, y_0^{\frac 12\order(\gamma)}.
\end{equation}
In particular, if $\order(\gamma)>0$ then $\lim_{y_0\rightarrow 0}\|K_{\gamma,\eps}(y_0,\cdot)\|_{L^1}=0$.

We use this to establish fixed time $L^p$ bounds on kernels of nonpositive order, for all $1\le p\le \infty$. The left parametrix is an integral kernel $K(\eps,w,v)$ where $K$ is a finite sum with $\order(\gamma)\ge 0$, and
$$
K(\eps,w,\exp_w(y))=\sum b_{\gamma,\eps}(w,y)K_{\gamma,\eps}(-y)
$$ 
with
$b_{\gamma,\eps}\in C^0_{y_0}C^\infty_{y',y'',w}$.

Recall from Definition \ref{def:thetavw} that $y=\Theta_w(v)$ is the solution to $v=\exp_w(y)$,
and we use notation $v=(s,z,\zeta)$, $w=(t,x,\xi)$. For fixed $w$, respectively fixed $v$, the maps $y\rightarrow v$ and $y\rightarrow w$ are 1-1 and the Jacobian factors satisfy
$$
\biggl|\frac {D\Theta_w(v)}{Dv}\biggr|=\det(B)^{-2}=\biggl|\frac {D\Theta_w(v)}{Dw}\biggr|
$$
From \eqref{eqn:intbound}, and the fact that $y_0=v_0-w_0=s-t$, we deduce that
\begin{lemma}\label{lem:operbound}
For each $\gamma$ there is $C_\gamma$ so that
\begin{align*}
\int \bigl|K_{\gamma,\eps}(-\Theta_w(v))\bigr|\,\delta(v_0)\,dv&\le C_\gamma\,|w_0|^{\frac 12\order(\gamma)},\\
\int \bigl|K_{\gamma,\eps}(-\Theta_w(v))\bigr|\,\delta(w_0)\,dw&\le C_\gamma\,|v_0|^{\frac 12\order(\gamma)}.
\end{align*}
\end{lemma}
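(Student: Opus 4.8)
The plan is to deduce both inequalities from the fixed-time identity \eqref{eqn:intbound} by a single change of variables in each integral; the only real content is that on the two slices $\{v_0=0\}$ and $\{w_0=0\}$ cut out by the delta factors, the relevant substitution has a constant Jacobian.

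For the first bound I would fix $w=(t,x,\xi)$ and perform the substitution $(z,\zeta)\mapsto(y',y'')$ defined by $y=\Theta_w(0,z,\zeta)$. By Definition \ref{def:thetavw}, along the slice $v_0=s=0$ the first coordinate $y_0=s-t=-t$ is constant, while $(y',y'')$ is obtained from $(z,\zeta)$ by first applying the flow $\Phi_{-t}$ — which preserves Lebesgue measure on $\R^{2n}$ because $X_0$ is divergence free — and then applying the affine map $(\tilde z,\tilde\zeta)\mapsto\bigl(B^{-1}(\xi-\tilde\zeta),\,B^{-1}(x-\tilde z+\thf(t-s)(\xi-\tilde\zeta))\bigr)$, whose Jacobian determinant equals $\det(B)^{-2}$ at every point. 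Hence $dz\,d\zeta=\det(B)^{2}\,dy'\,dy''$ on the slice, and since $\bigl(-\Theta_w(v)\bigr)_0=w_0-v_0=w_0$ there (in particular $w_0>0$ on the support of the integrand), after the harmless sign change $y'\to-y'$, $y''\to-y''$ the identity \eqref{eqn:intbound} gives
\[
\int \bigl|K_{\gamma,\eps}(-\Theta_w(v))\bigr|\,\delta(v_0)\,dv
=\det(B)^{2}\!\int \bigl|K_{\gamma,\eps}(w_0,y',y'')\bigr|\,dy'\,dy''
\le C_\gamma\,|w_0|^{\frac12\order(\gamma)}.
\]

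The second bound is entirely symmetric: I would fix $v=(s,z,\zeta)$ and substitute $(x,\xi)\mapsto(y',y'')$ via $y=\Theta_w(v)$ with $w_0=t=0$. Along this slice $y_0=s$ is fixed, and the point $(\tilde z,\tilde\zeta)=\Phi_{s}(z,\zeta)$ does not depend on $(x,\xi)$, so $(x,\xi)\mapsto(y',y'')$ is again affine with Jacobian determinant $\det(B)^{-2}$ — consistent with the identity $|D\Theta_w(v)/Dw|=\det(B)^{-2}$ already recorded before the lemma — and $\bigl(-\Theta_w(v)\bigr)_0=-v_0=|v_0|$ on the support. Applying \eqref{eqn:intbound} with $y_0=|v_0|$ yields the claim.

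There is essentially no obstacle beyond this bookkeeping; the one point worth checking with care is that neither change of variables produces a factor depending on the integration variable, which is exactly the combination of the volume-preserving property of the flow of the divergence-free field $X_0$ and the fact that the residual transformation in $\Theta_w$ is affine-linear with point-independent determinant $\det(B)^{-2}$.
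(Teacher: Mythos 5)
Your proof is correct and reproduces exactly the argument the paper leaves implicit: the paper simply observes that $|D\Theta_w(v)/Dv|=\det(B)^{-2}=|D\Theta_w(v)/Dw|$ (which is your volume-preserving flow plus constant-Jacobian affine map decomposition), that $y_0=s-t$, and then cites \eqref{eqn:intbound}. You have filled in the details of the change of variables and the sign/support bookkeeping, but the route is the same.
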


We next observe that for $\alpha\in \Z_+^{2n}$, by \eqref{eqn:XequalsY} one can express
$$
(\eps \partial_\xi)^{\alpha'}(\eps \partial_x)^{\alpha''} K_{\gamma,\eps}(-\Theta_w(v))=\sum_\theta c_\theta K_{\theta,\eps}(-\Theta_w(v))
$$
where the finite sum is over $\order(\theta)=\order(\gamma)-\order(\alpha)$.

By the Schur test, together with time translation invariance of $\Theta_w(v)$,
we obtain the following fixed time mapping properties, where
for a function $f\in L^1_{\mathit{loc}}(\R^{2n})$ we use the notation
$$
(\delta\otimes f)(v)=\delta(v_0)\,f(v',v'').
$$

\begin{corollary}\label{cor:operbound}
For all $\gamma$, and all $\alpha\in\Z_+^{2n}$, there is $C_{\alpha,\gamma}(T)$ so that, for all $1\le p\le\infty$ and all $0<\eps\le 1$, and $t=w_0\in [0,T]$,
$$
T_{\gamma,\eps}f(w)=\int K_{\gamma,\eps}(-\Theta_w(v))\,(\delta\otimes f)(v)\,dv
$$
satisfies
$$
\|(\eps \partial_\xi)^{\alpha'}(\eps \partial_x)^{\alpha''}(T_{\gamma,\eps}f)(t,\cdot)\|_{L^p(\R^{2n})}\le C_{\alpha,\gamma}(T)\,t^{\frac 12\order(\gamma)-\frac 12 \order(\alpha)}\|f\|_{L^p(\R^{2n})}.
$$
\end{corollary}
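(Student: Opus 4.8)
The plan is to reduce the estimate, for each fixed time, to the two fixed-time $L^1$ bounds of Lemma~\ref{lem:operbound} via the Schur test. First I would differentiate under the integral sign and rewrite $(\eps\partial_\xi)^{\alpha'}(\eps\partial_x)^{\alpha''}(T_{\gamma,\eps}f)(w)$ as a finite linear combination $\sum_\theta c_\theta\,(T_{\theta,\eps}f)(w)$ with $\order(\theta)=\order(\gamma)-\order(\alpha)$, exactly the reduction recorded just above the statement. The point is that $\partial_\xi=(B^T)^{-1}X'$ and $\partial_x=(B^T)^{-1}X''$ are constant-coefficient combinations of the pairwise-commuting fields $X_1,\dots,X_{2n}$, so by \eqref{eqn:XequalsY} the operator $(\eps\partial_\xi)^{\alpha'}(\eps\partial_x)^{\alpha''}$ acting in $w$ becomes a constant-coefficient sum of $(\eps Y)$-monomials acting in the $y$-variable, and then \eqref{eqn:epsgain} together with \eqref{eqn:YKdecomp} turns $(\eps Y')^{\beta'}(\eps Y'')^{\beta''}K_{\gamma,\eps}$ into a sum of $K_{\theta,\eps}$ with the order lowered by $|\beta'|+3|\beta''|=\order(\alpha)$. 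Differentiation under the integral is justified once one knows the fixed-time local $L^1$ integrability of $K_{\theta,\eps}(-\Theta_w(\cdot))$, which is part of what is being proved, and is controlled locally uniformly in $w$ by \eqref{eqn:intbound}.

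Next, for each fixed $t=w_0\in[0,T]$ and each $\theta$, I would regard $f\mapsto (T_{\theta,\eps}f)(t,\cdot)$ as an integral operator on $L^p(\R^{2n}_{x,\xi})$ with kernel $\mathcal K_t(x,\xi;z,\zeta)=K_{\theta,\eps}\bigl(-\Theta_{(t,x,\xi)}(0,z,\zeta)\bigr)$, the factor $\delta(v_0)$ in the definition of $T_{\theta,\eps}$ merely freezing $s=0$ in $dv=ds\,dz\,d\zeta$. The two Schur marginals are then precisely the integrals bounded in Lemma~\ref{lem:operbound}: the first, $\sup_{x,\xi}\int|\mathcal K_t|\,dz\,d\zeta=\int|K_{\theta,\eps}(-\Theta_w(v))|\,\delta(v_0)\,dv\le C_\theta|w_0|^{\frac12\order(\theta)}=C_\theta\,t^{\frac12\order(\theta)}$; and for the second, $\sup_{z,\zeta}\int|\mathcal K_t|\,dx\,d\xi$, I would use the time-translation invariance of $\Theta_w(v)$ (it depends on $w_0,v_0$ only through $v_0-w_0$) to shift $(t,0)\mapsto(0,-t)$, turning this marginal into $\int|K_{\theta,\eps}(-\Theta_w(v))|\,\delta(w_0)\,dw$ with $v_0=-t$, which is $\le C_\theta|v_0|^{\frac12\order(\theta)}=C_\theta\,t^{\frac12\order(\theta)}$. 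Here $\order(\theta)$ may be negative, in which case the bound blows up as $t\to0^+$, reflecting the expected loss of smoothing.

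The Schur test then gives that $(T_{\theta,\eps})(t,\cdot)$ is bounded on $L^p(\R^{2n})$ for every $1\le p\le\infty$ with norm $\le C_\theta\,t^{\frac12\order(\theta)}$, uniformly in $\eps\in(0,1]$; summing over the finitely many $\theta$, each with $\order(\theta)=\order(\gamma)-\order(\alpha)$, yields the claimed estimate with exponent $\tfrac12\order(\gamma)-\tfrac12\order(\alpha)$ and constant $C_{\alpha,\gamma}(T)$ (the $T$-dependence entering only through the range $t\in[0,T]$ of the time variable). There is no substantive obstacle here: the real work was done in \eqref{eqn:intbound} and Lemma~\ref{lem:operbound}, and the only place demanding care is the bookkeeping in the second Schur marginal — tracking signs, keeping straight which of $v_0,w_0$ is frozen versus integrated, and checking that the support $\{y_0>0\}$ of $K_{\theta,\eps}$ is respected (indeed $y_0=t-s>0$ in both marginals).
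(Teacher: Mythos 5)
Your proof is correct and follows essentially the same route as the paper: the displayed reduction $(\eps\partial_\xi)^{\alpha'}(\eps\partial_x)^{\alpha''}K_{\gamma,\eps}(-\Theta_w(v))=\sum_\theta c_\theta K_{\theta,\eps}(-\Theta_w(v))$ with $\order(\theta)=\order(\gamma)-\order(\alpha)$ via \eqref{eqn:XequalsY}, followed by the Schur test with the two marginals supplied by Lemma~\ref{lem:operbound} and time translation invariance of $\Theta_w(v)$. Your extra remarks (commutativity of the $X_j$, the $w_0\leftrightarrow v_0$ shift in the second marginal, and the support check $y_0=t-s>0$) are exactly the bookkeeping the paper leaves implicit, and are all accurate.
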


For fixed-time derivative estimates of integer order, we need to work with multiples of order 3, since this is the order of both $\partial_x$ and $(\partial_\xi)^3$. The key result (for $\eps=1$) is the following.

\begin{lemma}
Suppose that $P(y,\partial_{y'},\partial_{y''})$ is a polynomial differential operator of order at most $3m+\order(\gamma)$, where $m\in\Z_+$ and $\order(\gamma)\ge 0$. Then one can write
$$
\bigl(P(y,\partial_{y'},\partial_{y''})K_\gamma\bigr)(-y)=\sum_{\beta,\theta}c_{\beta,\theta}\,(Y^\beta)^t\bigl( K_\theta(-y)\bigr),
$$
where $\order(\beta)\le 3m$, $\beta_0=0$, and $\order(\theta)\ge 0$.
\end{lemma}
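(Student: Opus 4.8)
\smallsection{Proof proposal}
My plan is to reduce the assertion to a purely algebraic normal‑form computation for the explicit Gaussian kernel $K_0$, carried out entirely inside the non‑isotropic calculus, with no appeal to the analytic meaning of these kernels.

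\emph{Step 1: removing the reflection and the transpose.} Conjugating a polynomial differential operator by $y\mapsto -y$ again yields a polynomial differential operator of the same order, and $K_\gamma(-y)=(-1)^{|\gamma|}K_\gamma(y)$ since $K_0$ is even in $(y',y'')$. Moreover each $Y_i$ with $1\le i\le 2n$ is divergence free, hence skew‑adjoint, and these vector fields commute with one another (the only nonzero bracket is $[Y_0,Y_j]=Y_{j+n}$), so $(Y^\beta)^t=(-1)^{|\beta|}Y^\beta$ whenever $\beta_0=0$. Combining these observations, it suffices to prove: for every polynomial differential operator $Q$ in $y,\partial_{y'},\partial_{y''}$ of order at most $3m+\order(\gamma)$ one can write $QK_\gamma=\sum_{\beta,\theta}c_{\beta,\theta}\,Y^\beta K_\theta$ with $\order(\beta)\le 3m$, $\beta_0=0$, and $\order(\theta)\ge 0$.

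\emph{Step 2: a preliminary normal form.} Write $\partial_{y''_j}=Y_{j+n}$ and $\partial_{y'_j}=Y_j-\tfrac12 y_0 Y_{j+n}$; since $\tfrac12 y_0 Y_{j+n}$ has the same order $1$ as $Y_j$, this exhibits $Q$ as a polynomial in $y$ and $Y_1,\dots,Y_{2n}$ (no $Y_0$) of the same order. Using Lemma~\ref{lem1} together with the hypothesis $\order(\gamma)\ge 0$, write $K_\gamma=\sum_\tau c_\tau\,Y^\tau K_{(j_\tau,0,0)}$ with $\tau_0=0$ and $j_\tau\ge 0$ (the order identity in Lemma~\ref{lem1} forces $j_\tau\ge0$ once $\order(\gamma)\ge 0$, and $y_0^{j}$ commutes with every $Y_i$, $i\ge 1$). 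Applying $Q$, merging the $Y$'s, and then commuting the monomial coefficients of $Q$ rightward past all the $Y$'s—every bracket $[Y_i,y_k]$, $i\ge1$, being order preserving and never lowering the power of $y_0$—we arrive at $QK_\gamma=\sum_{\beta,\rho}c_{\beta,\rho}\,Y^\beta K_\rho$ with $\beta_0=0$ and $\rho_0\ge 0$, hence $\order(\rho)\ge 0$, and with each term of kernel order at most $3m$, i.e.\ $\order(\beta)-\order(\rho)\le 3m$.

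\emph{Step 3: lowering $\order(\beta)$.} I reduce the terms with $\order(\beta)>3m$ one generator at a time. Fix such a term $Y^\beta K_\rho$; from $\order(\beta)-\order(\rho)\le3m$ we have $\order(\rho)\ge\order(\beta)-3m\ge 1$. If $\beta$ contains some $Y_j$ with $1\le j\le n$ (order $1$), bring it—legitimately, since the $Y_i$, $i\ge1$, commute—to act directly on $K_\rho$; otherwise $\beta$ involves only the order‑$3$ generators $Y_{j+n}$, so $\order(\beta)=3|\beta''|$ is a multiple of $3$ exceeding $3m$, whence $\order(\beta)\ge3m+3$ and $\order(\rho)\ge 3$, and we bring an order‑$3$ generator inward. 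In either case the selected generator $Y_i$ satisfies $\order(Y_i)\le\order(\rho)$, and the explicit formulas $Y_jK_0=-\bigl(\tfrac{y'_j}{2y_0}+\tfrac{3y''_j}{y_0^2}\bigr)K_0$, $Y_{j+n}K_0=-\tfrac{6y''_j}{y_0^3}K_0$ show that $Y_iK_\rho$ is a finite sum of kernels $K_{\rho'}$, each with $\order(\rho')=\order(\rho)-\order(Y_i)$ (the same for all summands), hence $\order(\rho')\ge 0$. Thus the term becomes a sum of $Y^{\beta'}K_{\rho'}$ with $\order(\beta')=\order(\beta)-\order(Y_i)$ strictly smaller, $\beta'_0=0$, $\order(\rho')\ge0$, and unchanged kernel order. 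Iterating (the nonnegative integer $\order(\beta)$ strictly decreases, and the branching is finite), every term eventually has $\order(\beta)\le3m$, which is the desired form.

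\emph{Expected main obstacle.} Steps 1 and 2 are bookkeeping; the substance is the interplay in Step 3 between the two constraints $\order(\beta)\le 3m$ and $\order(\theta)\ge 0$. Peeling a generator lowers the kernel index $\rho$ in order by \emph{exactly} $\order(Y_i)$, uniformly over the resulting summands, while the excess $\order(\beta)-3m$ is always at least the order of \emph{some} generator present in $\beta$—a small arithmetic fact about the weights $1$ and $3$ and the fact that $3|\beta''|$ is a multiple of $3$. Making this combinatorial step precise, and checking that one never creates a kernel $K_\theta$ of negative order in the process, is where I expect the care to be needed.
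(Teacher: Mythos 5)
The key error is in Step 1: the claim $K_\gamma(-y)=(-1)^{|\gamma|}K_\gamma(y)$ is false. While $K_0$ is indeed even in $(y',y'')$, it is not even in $y_0$ --- recall $K_0(y)=G_{y_0}(y')\ts G_{y_0^3/12}(y'')$ with $G_t(w)=\one_{t>0}(4\pi t)^{-n/2}e^{-|w|^2/4t}$, so $K_0$ is supported in $\{y_0>0\}$ while $K_0(-\cdot)$ is supported in $\{y_0<0\}$. The reflection $y\mapsto -y$ flips $y_0$, so evenness in $(y',y'')$ alone does not give you what you claim. What the correct computation gives (and what the paper observes at the start of its proof) is that for $1\le j\le n$,
$$
(Y_j)^t\bigl(f(-\cdot)\bigr)(y)=\Bigl(\bigl(\partial_{y'_j}-\thf y_0\partial_{y''_j}\bigr)f\Bigr)(-y),\qquad
(Y_{j+n})^t\bigl(f(-\cdot)\bigr)(y)=\bigl(\partial_{y''_j}f\bigr)(-y),
$$
so the reduction you want is not $PK_\gamma=\sum c_{\beta,\theta}\,Y^\beta K_\theta$ but rather
$PK_\gamma=\sum c_{\beta,\theta}\,\tilde Y^\beta K_\theta$
with $\tilde Y_j=\partial_{y'_j}-\thf y_0\partial_{y''_j}$ (sign flipped), $\tilde Y_{j+n}=\partial_{y''_j}$. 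Since $\tilde Y$ is $Y$ conjugated by $y''\to -y''$, and $K_\gamma$ transforms by $(-1)^{|\gamma''|}$ under this conjugation, your universally-quantified Step~1 conclusion is equivalent to the correct one; but as stated the reasoning is broken and needs to be replaced by the explicit transpose-reflection identities above.

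Aside from this, Steps 2 and 3 are correct and amount to a valid iterative version of the paper's argument. The paper factors in one shot: writing $PK_\gamma=\sum c_{\alpha,\sigma}(\partial_{y'}-\thf y_0\partial_{y''})^{\alpha'}(\partial_{y''})^{\alpha''}K_\sigma$ with $|\alpha'|+3|\alpha''|\le 3m+\order(\sigma)$, it splits on whether $|\alpha''|\ge m$, factoring out $(\partial_{y''})^m$ in that case, or $(\partial_{y''})^{\alpha''}(\partial_{y'}-\thf y_0\partial_{y''})^{3m-3|\alpha''|}$ otherwise, and lets the remaining derivatives fall on $K_\sigma$. You instead peel off one generator at a time, with the arithmetic observation that when $\beta$ consists only of order-$3$ generators the excess over $3m$ is automatically at least $3$. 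Both routes work and use the same essential bookkeeping ($\order(\beta)-\order(\theta)$ is preserved, the factored-out order stays $\le 3m$, and the residual kernel order stays $\ge 0$); the paper's is a little more compact. So: fix Step 1 by replacing the false evenness claim with the $(Y^\beta)^t\circ R=R\circ\tilde Y^\beta$ identity, and the proof goes through.
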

\begin{proof}
Observe that $(Y')^t f(-y)=(\partial_{y'}f-\thf y_0\partial_{y''}f)(-y)$, and $(Y'')^tf(-y)=(\partial_{y''}f)(-y)$. 
Writing $\partial_{y'}$ and $\partial_{y''}$ in terms of $(Y')^t$ and $(Y'')^t$ expands $P(y,\partial_{y'},\partial_{y''})K_\gamma$ as a sum, with $|\alpha'|+3|\alpha''|\le 3m+\order(\sigma)$ and $\sigma\ge \gamma$,
$$
\sum c_{\alpha,\sigma}\,(\partial_{y'}-\thf y_0\partial_{y''})^{\alpha'} (\partial_{y''})^{\alpha''}K_{\sigma}.
$$
For a term with $|\alpha''|\ge m$, one can factor out $m$ powers of $\partial_{y''}$ and let the remaining derivatives fall on $K_{\sigma}$, which yields a sum of kernels of the form $K_\theta$ with $\order(\theta)\ge 0$. 

A term with $|\alpha'|+3|\alpha''|\le 3m$ is already in the desired form. For a term with $|\alpha''|<m$ and $|\alpha'|+3|\alpha''|> 3m$ we factor out $(\partial_{y''})^{\alpha''}$ and 
$3m-3|\alpha''|$ powers of $(\partial_{y'}-\thf y_0\partial_{y''})$; the remaining powers applied to $K_{\sigma}$ leads to kernels $K_\theta$ with $\order(\theta)\ge 0$.
\end{proof}

By a dilation in $\eps$, together with \eqref{eqn:Yjexpansion} for $N$ sufficiently large, we obtain the following.

\begin{corollary}
Suppose that  $P(\eps,y,\eps\partial_{y'},\eps\partial_{y''})$ is polynomial in each term, with $\order(P)\le 3m+\order(\gamma)$ where $m\in\Z_+$. Then one can write
$$
\bigl(P(\eps,y,\eps\partial_{y'},\eps\partial_{y''})K_{\gamma,\eps}\bigr)(-y)=\sum_{j,\beta,\theta}c_{j,\beta,\theta}\,\eps^j(\eps^{|\beta|} Y^\beta)^t\bigl( K_{\theta,\eps}(-y)\bigr),
$$
where $j\ge 0$, $\order(\beta)\le 3m$, $\beta_0=0$, and $\order(\theta)\ge 0$. 

Furthermore, with $X_j$ the representation of $X_j$ in exponential coordinates at a given point $w$, one can write
\begin{multline}\label{eqn:Xjtranspose}
\bigl(P(\eps,y,\eps\partial_{y'},\eps\partial_{y''})K_{\gamma,\eps}\bigr)(-y)=\sum_{j,\beta,\theta}c_{j,\beta,\theta}(w,y_0)\,\eps^j(\eps^{|\beta|} X^\beta)^t\bigl( K_{\theta,\eps}(-y)\bigr)\\
+\sum_{j,\sigma} r_{j,\sigma}(w,y)\eps^jK_{\sigma,\eps}(-y)
\end{multline}
for functions $c_{j,\beta,\gamma}\in C^0_{y_0}C^\infty_w$, with the same conditions on $j$, $\beta$ and $\theta$, and $r_{j,\sigma}\in C^0_{y_0}C^\infty_{w,y',y''}$, with $j\ge 0$ and $\order(\sigma)\ge 0$.
\end{corollary}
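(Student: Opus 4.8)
The plan is to obtain both assertions from the preceding Lemma: the first by reinstating the parameter $\eps$ through the anisotropic dilation under which $K_\gamma$ becomes $K_{\gamma,\eps}$, and the second from the first by substituting the expansion \eqref{eqn:Yjexpansion} of the model fields $Y_i$ in terms of the exponential-coordinate fields $X_i$.

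For the first assertion, write $f_\eps(y)=\eps^{-2n}f(y_0,\eps^{-1}y',\eps^{-1}y'')$, so that $K_{\gamma,\eps}=(K_\gamma)_\eps$. Two scaling facts are used: by \eqref{eqn:epsgain}, a monomial $y^\alpha(\eps\partial_{y'})^{\beta'}(\eps\partial_{y''})^{\beta''}$ applied to $K_{\gamma,\eps}$ equals $\eps^{|\alpha'|+|\alpha''|}(y^\alpha\partial_y^\beta K_\gamma)_\eps$ with $\beta=(0,\beta',\beta'')$; and since each $Y_j$, $j\ge 1$, is divergence free and satisfies $\eps Y_j(\,\cdot\,)_\eps=(Y_j\,\cdot\,)_\eps$, one has $\bigl((Y^\beta)^tg\bigr)_\eps=(\eps^{|\beta|}Y^\beta)^t(g_\eps)$ for $\beta_0=0$, together with $(g_\eps)(-y)=\bigl(g(-\,\cdot\,)\bigr)_\eps(y)$. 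Decomposing $P$ into monomials $c(y_0)\,\eps^k\,y^\alpha(\eps\partial_{y'})^{\beta'}(\eps\partial_{y''})^{\beta''}$ with $\order(y^\alpha\partial_y^\beta)\le 3m+\order(\gamma)$, applying each to $K_{\gamma,\eps}$, and invoking the preceding Lemma on $(y^\alpha\partial_y^\beta K_\gamma)(-y)$, produces the stated form; the powers of $\eps$ that accumulate are $k+|\alpha'|+|\alpha''|\ge 0$, so $j\ge 0$.

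For the second assertion, substitute \eqref{eqn:Yjexpansion} for each factor $Y_i$ occurring in the terms $(\eps^{|\beta|}Y^\beta)^tK_{\theta,\eps}(-y)$ produced by the first assertion. The leading contribution replaces $Y^\beta$ by $X^\beta$; the coefficients thereby acquire $w$-dependence, with regularity $C^0_{y_0}C^\infty_w$ inherited from \eqref{eqn:Yjexpansion}. The high-order remainder terms $r_{i,\alpha,k}(y)\,y^\alpha\partial_k$ of \eqref{eqn:Yjexpansion}, for which $\order(y^\alpha\partial_k)\le -N$, contribute — once $N$ exceeds a constant depending on $m$ and $\order(\gamma)$ — kernels of order $\ge 0$, which are placed in the remainder $\sum r_{j,\sigma}(w,y)\,\eps^jK_{\sigma,\eps}(-y)$ with $r_{j,\sigma}\in C^0_{y_0}C^\infty_{y',y'',w}$. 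The correction terms $c_{i,\alpha,k}(y_0)\,y^\alpha X_k$ have $\order(y^\alpha X_k)\le\order(Y_i)-2$; for these one commutes the factors until the monomial $y^\alpha$ meets the kernel and absorbs it via $y^\alpha K_{\theta,\eps}=\eps^{|\alpha'|+|\alpha''|}K_{\alpha+\theta,\eps}$ (using $\order(\alpha)\ge 0$), after which any excess $X$-derivatives are re-balanced back onto the kernel so as to keep $\order(\beta)\le 3m$ — possible precisely because the order drops by a full $2$ in \eqref{eqn:Yjexpansion}, which leaves margin to maintain both $\order(\beta)\le 3m$ and kernel order $\ge 0$. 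Iterating over the remaining $Y$-factors, and on the corrections themselves, the process terminates because $\order(y^\alpha X_k)$ drops by at least $2$ at each correction, so that after finitely many steps the residual operator applied to any $K_{\theta,\eps}$ with $\order(\theta)\ge 0$ already lands in the remainder.

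The main obstacle is this bookkeeping in the second assertion: carrying along in parallel the subelliptic order (to ensure no remainder kernel has negative order and that $\order(\beta)\le 3m$ survives the re-balancing) and the powers of $\eps$ (which remain nonnegative throughout), and checking that the substitution-and-re-balancing loop terminates. Everything else is a direct consequence of the preceding Lemma and of the scaling identities for $(\,\cdot\,)_\eps$.
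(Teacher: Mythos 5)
Your proposal is correct and follows exactly the route indicated in the paper's one-line proof (``By a dilation in $\eps$, together with \eqref{eqn:Yjexpansion} for $N$ sufficiently large''): the first identity comes from applying the $\eps$-dilation $K_{\gamma,\eps}=(K_\gamma)_\eps$ to the preceding Lemma together with the scaling facts $\eps Y_j(g_\eps)=(Y_jg)_\eps$ and \eqref{eqn:epsgain}, and the second comes from substituting \eqref{eqn:Yjexpansion} into the $Y^\beta$ factors and sorting by order, with the $O(y^{N+3})$ Taylor remainders of \eqref{eqn:Yjexpansion} landing in the $r_{j,\sigma}(w,y)K_{\sigma,\eps}(-y)$ part once $N$ is large relative to $m$ and $\order(\gamma)$. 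The only place your write-up is looser than it could be is the ``re-balancing'' of excess $X$-derivatives; the key point, which you do invoke, is that each correction term in \eqref{eqn:Yjexpansion} drops the order by at least $2$, so the commutation/absorption loop strictly decreases order and terminates, and the constraints $\order(\beta)\le 3m$, $\order(\theta)\ge 0$ can be maintained throughout.
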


\begin{theorem}\label{thm:opbound}
Suppose that $\order(\gamma)\ge 0$, and let
$$
T_{\gamma,\eps}f(w)=\int K_{\gamma,\eps}(-\Theta_w(v))\,(\delta\otimes f)(v)\,\chi(\Theta_w(v))\,dv
$$
Then if $|\alpha'|+3|\alpha''|\le 3m+\order(\gamma)$, $m\in\Z_+$, there is $C_{\gamma,m}$ so that, for all $1\le p\le\infty$ and all $0<\eps\le 1$,
\begin{multline*}
\sup_{t\ge 0}\,\bigl\|(\eps\partial_\xi)^{\alpha'}(\eps\partial_x)^{\alpha''}T_{\gamma,\eps}f(t,\cdot)\bigr\|_{L^p(\R^{2n})}\\
\le
C_{\gamma,m}
\sum_{|\beta'|+3|\beta''|\le 3m}\|(\eps\partial_\xi)^{\beta'}(\eps\partial_x)^{\beta''}f\|_{L^p}.
\end{multline*}
\end{theorem}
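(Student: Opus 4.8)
The plan is to differentiate under the integral, convert the resulting $w$-derivatives of $K_{\gamma,\eps}(-\Theta_w(v))$ into $y$-derivatives by \eqref{eqn:XequalsY}, trade those for transposed vector fields $(\eps^{|\beta|}X^\beta)^t$ acting on kernels of \emph{nonnegative} order via \eqref{eqn:Xjtranspose}, integrate by parts to move these onto $f$, and then close with the Schur test exactly as in Corollary~\ref{cor:operbound}. A preliminary observation: since $\chi(\Theta_w(v))$ is supported where $|\Theta_w(v)|<1$ and the first component of $\Theta_w(v)$ is $v_0-w_0=-t$ on the slice $v_0=0$, the operator $T_{\gamma,\eps}f(t,\cdot)$ vanishes for $t\ge1$; hence $\sup_{t\ge0}$ is really $\sup_{0\le t<1}$, and all the $t$-dependent bounds (the $C^0_{y_0}$ coefficients, \eqref{eqn:Phibounds}, the bounds on derivatives of $\Theta_w$) may be used uniformly.

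First I would write $(\eps\partial_\xi)^{\alpha'}(\eps\partial_x)^{\alpha''}=\sum_\beta c_\beta(\eps X)^\beta$ with $\beta_0=0$, $|\beta'|=|\alpha'|$, $|\beta''|=|\alpha''|$ (possible since $\partial_\xi,\partial_x$ are constant-coefficient combinations of $X_1,\dots,X_{2n}$), so $\order(\beta)=|\alpha'|+3|\alpha''|$. Applying this under the integral and expanding by the product rule, the terms in which some $\eps X$ falls on $\chi(\Theta_w(v))$ are supported in $\{\hf<|\Theta_w(v)|<1\}$; on that set the leftover $X$-derivatives of $K_{\gamma,\eps}(-\Theta_w(v))$, rewritten via \eqref{eqn:XequalsY} as a combination of $K_{\theta,\eps}(-\Theta_w(v))$, have $L^1$-norm in $(v',v'')$ and in $(w',w'')$ bounded uniformly over $\eps\in(0,1]$ and $t\ge0$, since the Gaussian form of $K_0$ forces rapid smallness once $|\Theta_w(v)|\ge\hf$ (compare \eqref{eqn:intbound}). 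By the Schur test these contribute an operator bounded $L^p\to L^p$ uniformly, which is absorbed into the $\beta=0$ term on the right.

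The main term is the one in which all $\eps X$ land on $K_{\gamma,\eps}(-\Theta_w(v))$. Iterating \eqref{eqn:XequalsY}, it equals $\bigl(P(\eps,y,\eps\partial_{y'},\eps\partial_{y''})K_{\gamma,\eps}\bigr)(-\Theta_w(v))$ with $P=\sum_\beta c_\beta\prod_j(\eps Y_j)^{\beta_j}$; expanding the $\eps Y_j$ shows $P$ is polynomial in $y_0$ with constant coefficients in $(\eps\partial_{y'},\eps\partial_{y''})$, each monomial of subelliptic order exactly $|\alpha'|+3|\alpha''|\le 3m+\order(\gamma)$, so $P$ satisfies the hypothesis of the corollary preceding the theorem. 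Applying \eqref{eqn:Xjtranspose} with $y=\Theta_w(v)$ rewrites this as a finite sum of terms $c_{j,\beta,\theta}(w,-t)\,\eps^j\bigl((\eps^{|\beta|}X^\beta)^t K_{\theta,\eps}(-\cdot)\bigr)(\Theta_w(v))$, with $j\ge0$, $\beta_0=0$, $\order(\beta)\le 3m$, $\order(\theta)\ge0$, together with remainder terms $r_{j,\sigma}(w,\Theta_w(v))\,\eps^j K_{\sigma,\eps}(-\Theta_w(v))$, $\order(\sigma)\ge0$; on the relevant support $y_0=-t\in(-1,0]$ and $|\Theta_w(v)|<1$, so all these coefficients are uniformly bounded. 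Substituting into the integral and changing variables to $v=\exp_w(y)$ (so $\chi(\Theta_w(v))$ becomes $\chi(y)$ and the Jacobian is $\det(B)^2$), I would integrate by parts in $y$: by Lemma~\ref{lem:Xjapprox} the coordinate representation of $X_j$, $j\ge1$, contains no $\partial_{y_0}$, hence annihilates the $(w,y_0)$-dependent $c_{j,\beta,\theta}$, so $\eps^{|\beta|}X^\beta$ is carried onto $(\delta\otimes f)(\exp_w(\cdot))$ and onto $\chi(y)$ (the latter giving further terms of the kind already treated). Undoing the change of variables and using that the fields $X_j$, $j\ge1$, are $\sum_iB_{ij}\partial_{\xi_i}$ and $\sum_iB_{ij}\partial_{x_i}$, which do not touch the time variable, yields $\eps^{|\beta|}X^\beta(\delta\otimes f)=\delta\otimes\bigl(\text{a constant combination of }(\eps\partial_\xi)^{\mu'}(\eps\partial_x)^{\mu''}f\bigr)$ with $|\mu'|=|\beta'|$, $|\mu''|=|\beta''|$.

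Collecting everything, $(\eps\partial_\xi)^{\alpha'}(\eps\partial_x)^{\alpha''}T_{\gamma,\eps}f(w)$ becomes a finite sum of operators
\[
\eps^j\!\int K_{\theta,\eps}(-\Theta_w(v))\,g(w,v)\,\bigl((\eps\partial_\xi)^{\mu'}(\eps\partial_x)^{\mu''}f\bigr)(v',v'')\,\delta(v_0)\,dv
\]
with $g$ bounded, $j\ge0$, $\order(\theta)\ge0$ and $|\mu'|+3|\mu''|=\order(\beta)\le 3m$ (plus analogous remainder terms with $\order(\sigma)\ge0$ and $\mu=0$). For each, Lemma~\ref{lem:operbound} together with \eqref{eqn:intbound} gives, for $\order(\theta)\ge0$ and $t=w_0<1$, that $\int|K_{\theta,\eps}(-\Theta_w(v))|\,\delta(v_0)\,dv\le C_\theta t^{\frac12\order(\theta)}\le C_\theta$, and by time-translation invariance the same holds with $\delta(v_0)$ replaced by $\delta(w_0)$; since $|g|\le C$ and $\eps^j\le1$, the Schur test yields the desired $L^p\to L^p$ bound, uniform over $1\le p\le\infty$, $\eps\in(0,1]$ and $t\ge0$. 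Summing the finitely many terms gives the stated estimate. I expect the main obstacle to be the order bookkeeping through the middle step: one must verify that converting the $w$-derivatives to the $y$-operator $P$ preserves $\order(P)\le 3m+\order(\gamma)$ — which is precisely the hypothesis $|\alpha'|+3|\alpha''|\le 3m+\order(\gamma)$ combined with the order-preserving change of frame $X\leftrightarrow Y$ — so that after \eqref{eqn:Xjtranspose} the $X$-operators handed to $f$ carry order $\le 3m$ while the kernels $K_\theta,K_\sigma$ left behind have nonnegative order, which is what prevents a negative power of $t$ from appearing in the final Schur estimate.
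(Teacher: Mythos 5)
Your proof follows the same route as the paper's: convert the $w$-derivatives of $K_{\gamma,\eps}(-\Theta_w(v))$ into $Y$-derivatives via \eqref{eqn:XequalsY}, reorganize them as transposed $X$-operators acting on nonnegative-order kernels via \eqref{eqn:Xjtranspose}, integrate by parts to pass $(\eps X)^\beta$ (with $\order(\beta)\le 3m$) onto $f$, and close with the Schur test of Corollary~\ref{cor:operbound} on the unit time interval provided by $\supp\chi$. You are somewhat more explicit than the paper about the contributions where derivatives land on the cutoff $\chi$ and about the restriction $t<1$; otherwise the argument and bookkeeping of orders are essentially identical.
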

\begin{proof}
Applying \eqref{eqn:XequalsY} we write
$$
(\eps X')^{\alpha'}(\eps X'')^{\alpha''}K_{\gamma,\eps}(-\Theta_w(v))
=\bigl((\eps Y')^{\alpha'}(\eps Y'')^{\alpha''}K_{\gamma,\eps}\bigr)(-\Theta_w(v)).
$$
We can apply \eqref{eqn:Xjtranspose} since
the transpose in $dv$ is the same as in $dy$,
\begin{multline*}
(\eps X')^{\alpha'}(\eps X'')^{\alpha''}T_{\gamma,\eps}f(w)\\
=\sum c_{j,\theta,\beta}(w,w_0)\,\eps^j\int K_{\theta,\eps}(-\Theta_w(v))\,(\delta\otimes (\eps X)^\beta f)(v)\,\chi(\Theta_w(v))\,dv\\
+\int R(\eps,w,v,-\Theta_w(v))\,(\delta\otimes f)(v)\,dv
\end{multline*}
where the sum is over $\order(\beta)\le 3m$, $\beta_0=0$, and $\order(\theta)\ge 0$. $R$ is a sum of terms $\eps^jr_{j,\sigma}(w,\Theta_w(v))K_{\sigma,\eps}(-\Theta_w(v))$ with $\order(\sigma)\ge 0$. The bound of the theorem now follows from Corollary \ref{cor:operbound}, since we are restricted to a unit time interval by the support of $\chi$.
\end{proof}

\section{Proof of the subelliptic estimate}\label{sec:proof}

We establish \eqref{eqn:Gronwall} as a result of Theorem \eqref{thm:main} below, which counts the order of derivatives in $x$ and $\xi$ as they are counted in the subelliptic calculus.

Let us first verify that \eqref{eqn:Gronwall} holds over $0\le t\le 1$. For $N=0$ this follows from the fact that $e^{tQ}$ is a contraction on $L^p$ for $1\le p\le\infty$ (see \eqref{eqn:Lpbound} below). Since the second order terms in $Q$ are constant coefficient, and all derivatives of $V$ and $\ell_j(x)$ are globally bounded, it follows that $[Q,(\eps\partial_x)^\alpha(\eps\partial_\xi)^\beta]$ is a differential operator in $\eps\partial_x$ and $\eps\partial_\xi$ of order at most $|\alpha|+|\beta|$, with smooth bounded coefficients.
An induction argument using the Duhamel formula and Gronwall's inequality then establishes \eqref{eqn:Gronwall} uniformly over $t\in [0,1]$.

Combining this with \eqref{eqn:globalest} below for $T=1$ now shows that \eqref{eqn:Gronwall} holds globally in time.

\begin{theorem}\label{thm:main}
Assume that $p=\frac 12|\xi|^2+V(x)$ with real $V$ satisfying \eqref{eqn:Vbounds}, and $Q$ is of the form \eqref{eqn:Qdef}, where \eqref{eqn:cholesky} holds with non-singular $B$.
Then for all $N\in\Z_+$ there is $C_N$  such that, with $a(t)=e^{tQ}a(0)$ and
$\eps=\sqrt{\gamma h/2}$, for all $1\le p\le\infty$, and all $\eps,\gamma\in(0,1]$,
\begin{multline}\label{eqn:localest}
\sup_{0\le t\le \infty}\,\sum_{|\alpha|+3|\beta|\le 3N}
\|(\eps\partial_\xi)^\alpha(\eps\partial_x)^\beta a(t)\|_{L^p(\R^n)}\\
\le 
C_N\sum_{|\alpha|+3|\beta|\le 3N}\|(\eps\partial_\xi)^\alpha(\eps\partial_x)^\beta a(0)\|_{L^p(\R^n)}.
\end{multline}
Additionally, for all $N\in\Z_+$ and $T>0$ there is $C_{N,T}$ so that
\begin{equation}\label{eqn:globalest}
\sup_{T\le t\le \infty}\,\sum_{|\alpha|+3|\beta|\le N}
\|(\eps\partial_\xi)^\alpha(\eps\partial_x)^\beta a(t)\|_{L^p(\R^n)}
\le 
C_{N,T}\|a(0)\|_{L^p(\R^n)},
\end{equation}
where $C_{N,T}=\bigO(T^{-\frac 12 N})$ as $T\rightarrow 0^+$.
\end{theorem}

Estimate \eqref{eqn:globalest} captures the smoothing effect of $e^{tQ}$ for $t>0$.
Estimate \eqref{eqn:localest} is the result of the parametrix mapping properties, where the counting of $\alpha$ and $\beta$ reflects the relative order of $\partial_x$ versus $\partial_\xi$ in the nonisotropic subelliptic calculus associated to $Q$.

For simplicity, in proving Theorem \eqref{thm:main} we consider data $a(0)\in\S(\R^{2n})$; the a priori estimates will allow the extension of Theorem \ref{thm:main} to spaces of functions with $L^p$ bounds on a finite number of derivatatives. We start with the following two results.

\medskip

\noindent{\bf Maximum Principal}:
$\displaystyle
\sup_{t>0}\sup_{x,\xi} a(t,x,\xi)\le \sup_{x,\xi} a(0,x,\xi).
$

\medskip

\noindent{\bf Mass Conservation}: for $t>0$,
$\displaystyle
\;\int_{\R^{2n}} a(t,x,\xi)\,dx\,d\xi=\int_{\R^{2n}} a(0,x,\xi)\,dx\,d\xi.
$

\medskip

Both of these follow as for the standard heat equation; for completeness we include the proofs here.
For the maximum principle, it suffices to work on a strip $t\in [0,T]$ for $T<\infty$. Given $c>0$ consider the function $a_c=a-c t$, so $(\partial_t-Q)a_c=-c<0$. At a local maximum of $a_c$ over $0\le t\le T$ we must have $\nabla_x a=0=\nabla_\xi a$, and $X_j^2 a\le 0$ for $j\ge 1$. Additionally $\partial_t a\ge 0$ if $t\ne 0$. This leads to a contradiction if $t\ne 0$. Thus $a_c$ attains it maximum at $t=0$. Since this holds for all $c>0$, it follows that $a$ attains its maximum at $t=0$.

Mass conservation follows by writing
$$
\partial_t \int a\,dx\,d\xi=\int Q a\,dx\,d\xi.
$$
The first order terms in $Q$ are divergence free, and the second order terms have constant coefficients, hence we may integrate by parts, using decay of $a$ in $(x,\xi)$, to see that the right hand side vanishes.

Since the equation is linear and real, the maximum principal shows that 
$$
\|e^{tQ}a(0)\|_{L^\infty(\R^{2n})}\le \|a(0)\|_{L^\infty(\R^{2n})}.
$$
The same bound holds for the transpose of $Q$, so by duality this also holds for the $L^1$ norm.
Alternatively, the $L^1$ contractivity follows by positivity together with mass conservation. We conclude by interpolation that
\begin{equation}\label{eqn:Lpbound}
\|e^{tQ}a(0)\|_{L^p(\R^{2n})}\le \|a(0)\|_{L^p(\R^{2n})}\quad\text{for all}\;\;1\le p\le\infty.
\end{equation}

\begin{proof}[Proof of Theorem \ref{thm:main}]

Given $N$, let $T_K$ be a right inverse for $\partial_t-Q$ as in \eqref{eqn:rightinverse} such that the remainder is of order $-3N$, and let $g=T_K(\delta\otimes a(0))$. 
Since $K(v,y)-K_{0,\eps}(y)$ is of negative order, and $K_{0,\eps}(y_0,\cdot)\rightarrow\delta(\cdot)$ as $y_0\rightarrow 0+$, by \eqref{eqn:intbound} we have that
$\lim_{t\rightarrow 0+}g(t,\cdot)=a(0)$ in the $L^p(\R^{2n})$ norm.

Recall that $X'=B^T\partial_\xi$ and $X''=B^T\partial_x$ with $\det(B)\ne 0$. The coefficients of $K$ are continuous in $y_0$ and smooth in $v',v''$.
By Theorem \ref{thm:opbound} we deduce that, for every $m\in\Z_+$,
\begin{multline}\label{eqn:vbound}
\sup_{t>0}\sum_{|\alpha'|+3|\alpha''|\le 3m}\bigl\|(\eps \partial_\xi)^{\alpha'}(\eps \partial_x)^{\alpha''}g(t,\cdot)\bigr\|_{L^p}
\\
\le
C_N\sum_{|\alpha'|+3|\alpha''|\le 3m}\bigl\|(\eps \partial_\xi)^{\alpha'}(\eps \partial_x)^{\alpha''}a(0)\bigr\|_{L^p}.
\end{multline}
Additionally, $(\partial_t-Q)g=R_{3N+1}(\delta\otimes a(0))$ satisfies
\begin{equation}\label{eqn:Lvbound}
\sup_{t>0}\sum_{|\alpha'|+3|\alpha''|\le 3N}\bigl\|(\eps \partial_\xi)^{\alpha'}(\eps \partial_x)^{\alpha''}(\partial_t-Q)g(t,\cdot)\bigr\|_{L^p}
\le
C_N\|a(0)\|_{L^p}.
\end{equation}

Let $h=e^{tQ}a(0)-g$, so $(\partial_t-Q)h=-(\partial_t-Q)g$ and $h=0$ for $t\le 0$.
To control $h$, let $K$ be the left parametrix constructed in Theorem \ref{thm:epsleftinverse} for $\partial_t-Q$ with remainder $R_{3N+1}$ of order $-3N$, and write
\begin{multline*}
h(w)=-\int K(\eps,w,-y)\bigl(\partial_t g-Q g\bigr)(\exp_w(y))\,\chi(y)\,dy
\\
+\int R_{3N+1}(\eps,w,-y)h(\exp_w(y))\,dy.
\end{multline*}
By \eqref{eqn:Lpbound} and \eqref{eqn:vbound} with $m=0$ we have $\|h(t,\cdot)\|_{L^p}\le C\|a(0)\|_{L^p}$ for $t\ge 0$.
If $|\alpha'|+3|\alpha''|\le N$, and $w=(t,x,\xi)$, Theorem \ref{thm:opbound} with $m=0$ then yields
\begin{multline*}
\sup_{t>0}\,\bigl\|(\eps\partial_\xi)^{\alpha'}(\eps\partial_x)^{\alpha''} \int R_{3N+1}(\eps,w,-y)h(\exp_w(y))\,dy\,\bigr\|_{L^p(dx\,d\xi)}\\
\le C\|a(0)\|_{L^p},
\end{multline*}
where we use that the kernel of $R_{3N+1}$ is supported in $|y|<1$ so uniform in time $L^p$ bounds on $h$ control the integral over $y$. To handle the first term in $h$ we use \eqref{eqn:Lvbound} and Theorem \ref{thm:opbound} with $m=3N$. Together we conclude that
$$
\sup_{t>0}\sum_{|\alpha'|+3|\alpha''|\le 3N}\bigl\|(\eps\partial_\xi)^{\alpha'}(\eps\partial_x)^{\alpha''}h(t,\cdot)\bigr\|_{L^p(dx\,d\xi)}\le C\|a(0)\|_{L^p}.
$$
Together with \eqref{eqn:vbound} this establishes the bounds of \eqref{eqn:localest} on $u$.

To establish \eqref{eqn:globalest} we follow the same steps, and note that the derivatives up to order $3N$ of $h$ satisfy uniform $L^p$ bounds for $t\ge 0$. Additionally, $g$ is supported in $t\le 1$ and satisfies the bounds of \eqref{eqn:globalest} by Corollary \ref{cor:operbound}, noting that Lemma \ref{lem:operbound} is invariant under transpose.
\end{proof}


\section{Applications to classical-quantum correspondence}\label{sec:classquant}

We show here how Theorem \ref{thm:Li} and Theorem \ref{thm:gaz5} follow from Theorem \ref{thm:main} and results from \cite{Li} and \cite{gaz5}. We consider real valued $V$ satisfying \eqref{eqn:Vbounds}, and complex valued $\ell_j$ satisfying \eqref{eqn:cholesky} with non-degenerate $B$, and define $Q$ by \eqref{eqn:Qdef}. Recall that $S^{L^q}_\rho$ is defined by the seminorms
$$
\|a\|_{N,S^{L^q}_\rho}=\sum_{|\alpha|\le N} h^{\rho |\alpha|-\frac nq}\|\partial_{x,\xi}^\alpha a\|_{L^q}.
$$
We first consider Theorem \ref{thm:Li}, which concerns $q=1$ and the trace-class norm. As noted in Lemma 2.1 of \cite{Li}, the connection to the trace norm is given by Theorem 9.3 of \cite{Dim} and (C.3.1) of \cite{e-z},
\begin{equation}\label{eqn:L1trace}
\|\Op_h^\w(a)\|_{\L^1}\le C_n\|a\|_{2n+1,S^{L^1}_{1/2}}
\end{equation}

\begin{proof}[Proof of Theorem \ref{thm:Li}]
The proof follows that of Theorem 2 of \cite{Li}. We include the outline of the proof for convenience. Using $A(0)=\Op_h^\w(a(0))$ one writes
\begin{align*}
A(t)-\Op_h^\w(a(t))&=\int_0^te^{(t-s)\L}(\L-\partial_s)\Op_h^\w(a(s))\,ds\\
&=\int_0^te^{(t-s)\L}\Bigl(\L\Op_h^\w(a(s))-\Op_h^\w(Qa(s))\Bigr)\,ds.
\end{align*}
Since $e^{t\L}$ is a contraction on the space of trace class operators for $t\ge 0$, it suffices to verify that, uniformly for all $s\in [0,\infty)$,
$$
\Bigl\|\L\Op_h^\w(a(s))-\Op_h^\w(Qa(s))\Bigr\|_{\L^1}\le C h^{\frac 12}.
$$
Since $\gamma>0$ is fixed, we have $\eps=\sqrt{\gamma h/2}\approx h^{\frac 12}$. Theorem \ref{thm:main} thus implies
$$
\sup_{s\ge 0}\|a(s)\|_{N,S_{1/2}^{L^1}}\le C_N\,\|a(0)\|_{N,S_{1/2}^{L^1}},\quad \forall N\ge 0.
$$
The jump operators $L_j$ are linear functions of $x$, so the composition rules for the semi-classical Weyl calculus imply that, with $P=p^\w(x,hD)$,
\begin{equation}\label{eqn:comp}
\L\Op_h^\w(a(s))-\Op_h^\w(Qa(s))=\frac{i}{h}[P,\Op_h^\w(a(s))]-\Op_h^\w(H_pa(s)).
\end{equation}
The proof is thus concluded by applying composition results from \cite{e-z} as in Lemma 2.3 of \cite{Li} to obtain, for $N$ depending on $n$,
$$
\Bigl\|\frac{i}{h}[P,\Op_h^\w(a(s))]-\Op_h^\w(H_pa(s))\Bigr\|_{2n+1,S_{1/2}^{L^1}}\le 
C h^{\frac 12}\|a(s)\|_{N,S_{1/2}^{L^1}},
$$
which together with \eqref{eqn:L1trace} implies the desired estimate.
\end{proof}

Theorem \ref{thm:gaz5} extends results of \cite{gaz5} concerning estimates in the Hilbert-Schmidt norm, and relate to our estimates for $q=2$. The connection of $S_{\rho}^{L^2}$ to the Hilbert-Schmidt norm is given by 
\begin{equation}\label{eqn:L2trace}
\|\Op_h^\w(a(t))\|_{\L^2}=(2\pi h)^{-\frac n2}\|a(t)\|_{L^2}=(2\pi)^{-\frac n2}\|a(t)\|_{0,S^{L^2}_\rho}.
\end{equation}

\begin{proof}[Proof of Theorem \ref{thm:gaz5}]
The proof is similar to that of Theorem \ref{thm:Li}.
The semigroup $e^{t\L}$ is a contraction on Hilbert-Schmidt operators for $t\ge 0$ by Proposition 4.6 of \cite{gaz5}, as $M=0$ in our case, so it suffices to verify that, uniformly for all $s\in [0,\infty)$,
$$
\Bigl\|\L\Op_h^\w(a(s))-\Op_h^\w(Qa(s))\Bigr\|_{\L^2}\le C h^{2-3\rho}.
$$
Since $1\ge\gamma\ge h^{2\rho-1}$ we have $h^{\frac 12}\ge\eps\ge h^{\rho}$. Theorem \ref{thm:main} thus implies
$$
\sup_{s\ge 0}\|a(s)\|_{N,S_\rho^{L^2}}\le C_N\,\|a(0)\|_{N,S_\rho^{L^2}},\quad \forall N\ge 0.
$$
By \eqref{eqn:comp} and \eqref{eqn:L2trace} the theorem follows from showing that
$$
\Bigl\|\frac{i}{h}[P,\Op_h^\w(a(s))]-\Op_h^\w(H_pa(s))\Bigr\|_{0,S_\rho^{L^2}}\le 
C h^{2-3\rho}\|a(s)\|_{N,S_\rho^{L^2}}
$$
for some $N$ depending on $n$, which holds by Lemma 2.2 of \cite{gaz5}.
\end{proof}

\section*{Acknowledgements} I would like to thank Maciej Zworski for bringing the topic of this paper to my attention, and for his encouragement in pursuing the results obtained herein. I would also like to thank Kevin Li for helpful discussions on the details of his paper \cite{Li}.



\nocite{*}
\bibliographystyle{plainurl}
\bibliography{FokkerPlanck}


\end{document}